\documentclass[11pt,a4paper,leqno,twoside,notitlepage]{article}

\pagestyle{myheadings}
\markboth{O. Ungermann}{}

\usepackage{amsmath}
\usepackage{amsfonts}
\usepackage{amssymb}
\usepackage{amsxtra}
\usepackage{amsthm}
\usepackage{enumerate}
\usepackage{ifthen}
\usepackage{xypic}

\newboolean{details}




\theoremstyle{plain}
\newtheorem{thm}[equation]{Theorem}
\newtheorem*{thm*}{Theorem}
\newtheorem{prop}[equation]{Proposition} 
\newtheorem*{prop*}{Proposition}
\newtheorem{lem}[equation]{Lemma}

\theoremstyle{definition}
\newtheorem{defn}[equation]{Definition}


\newcommand{\nb}{neighborhood}

\newcommand{\mC}{{\mathbb C}}

\newcommand{\mN}{{\mathbb N}}

\newcommand{\mR}{{\mathbb R}}

\newcommand{\mcA}{\mathcal{A}}
\newcommand{\mcB}{\mathcal{B}}
\newcommand{\mcC}{\mathcal{C}}

\newcommand{\frakH}{\mathfrak{H}}

\newcommand{\CN}{C^{\ast}(N)}

\newcommand{\Cst}{C^{\ast}}

\newcommand{\twoheadlongrightarrow}{\,\,\rightarrow\mspace{-24.0mu}\longrightarrow\,}

\renewcommand{\to}{\longrightarrow}

\newcommand{\onto}{\mbox{$\twoheadlongrightarrow$}}

\newcommand{\mdot}{\!\cdot\!}
\newcommand{\clos}{^{\textbf{--}}}

\providecommand{\Id}{\mathop{\rm Id}\nolimits}

\providecommand{\ext}{\mathop{\rm ext}\nolimits}

\providecommand{\Prim}{\mathop{\rm Prim}\nolimits}

\providecommand{\res}{\mathop{\rm res}\nolimits}

\setboolean{details}{false}

\setlength{\textwidth}{145mm}
\setlength{\oddsidemargin}{10mm}
\setlength{\evensidemargin}{18mm}

\begin{document}
\thispagestyle{empty}
\boldmath
\centerline{\Large\bfseries{On Glimm's Theorem for locally
quasi-compact}}
\vspace{0.5cm}
\centerline{\Large\bfseries almost Hausdorff $G$-spaces}
\unboldmath
\vspace{1.5cm}
\centerline{\large Oliver Ungermann}\vspace{0.5cm}

\vspace{1cm}
\centerline{\large July 2016}
\vspace{1cm}
\hspace{7mm}\parbox{110mm}
{\small\noindent\textbf{Abstract.} 
We establish a characterization of the well-behaved orbits of
a totally Baire $G$-space of a hereditary Lindel\"of locally 
compact group under a mild assumption of Hausdorffness.
Then we give a reformulation of Glimm's theorem generalizing the 
assertion from second countable to hereditary Lindel\"of locally
compact groups acting on second countable spaces.
Next we show that almost Hausdorff $G$-spaces of compact groups 
are almost proper and hence regular.
Finally we recall some applications of these results in 
harmonic analysis.\\[1cm]
\centerline{\textbf{MSC 2000: }22-01, 22F05, 22F30}}

\vspace{0.5cm}

\subsection*{Introduction}
In 1961 James Glimm proved the following theorem.
\begin{thm*}
 Let $G$ be a second countable locally compact group and $X$ a second 
 countable locally quasi-compact $G$-space. Suppose that every closed
 subset of $X$ contains a non-empty relatively open Hausdorff subset.
 Then the following are equivalent:
 \begin{enumerate}
  \item Each orbit in $X$ is relatively open in its closure.
  \item The orbit space $G\setminus X$ is $T_0$.
  \item $G\setminus X$ is countably separated.
  \item For each quasi-invariant ergodic Borel measure $\beta$, there
  is an orbit $G\mdot x$ in $X$ such that $\beta(X\setminus G\mdot x)=0$.
  \item Every closed subset of $G\setminus X$ contains a non-empty
  relatively open Hausdorff subset.
  \item For each $x\in X$, the map $gG_x\mapsto g\mdot x$ from $G/G_x$
  onto $G\mdot x$ is a homeomorphism, where $G\mdot x$ has the relative
  topology as a subspace of $X$.
  \item For each neighborhood $N$ of $e$ (the identity of $G$), each 
  non-empty locally quasi-compact $G$-invariant subset $V$ of $X$ and
  each non-empty relatively open subset $V_0$ of $V$ there is a non-empty
  relatively open subset $U$ of $V_0$ such that for each $x\in U$, 
  $N\mdot x\cap U=G\mdot x\cap U$.
 \end{enumerate}
\end{thm*}

\noindent In this note we give a reformulation of the proof of this
theorem taking into account some addenda of M.\ Rieffel and D.\ P.\ Williams.
We emphasize topological aspects of the theory and avoid measure theory.
We generalize the assertion of Glimm's theorem from second countable
groups to hereditary Lindel\"of locally compact groups $G$ acting on second
countable almost Hausdorff spaces $X$.\\\\
As an application of Glimm's theorem we state variants of the following
result which constitutes a first step in Mackey's theory of little groups.
Let $N$ be a second countable closed normal subgroup of a hereditary 
Lindel\"of locally compact group $G$. Since $G$ acts on $N$ and hence
on $\CN$ by conjugation, the unitary dual $X=\widehat{N}$ of $N$ becomes
a $G$-space in a natural way. Suppose that $N$ is of type I and that all
orbits of $\widehat{N}$ are locally closed. Then for every factor
representation $\pi$ of $G$ there exists a unique orbit $G\mdot\sigma$
of $\widehat{N}$ such that $\pi\,|\,N$ is weakly equivalent
to $G\mdot\sigma$.

\subsection*{Transformation groups}
\noindent In this section we shall discuss group actions on topological spaces
which do not satisfy the Hausdorff separability axiom. Throughout this
text all topological groups $G$ and topological spaces $X$ are always
understood to be $T_0$: If $x\neq y$ are in $X$ such that $x\in\{y\}\clos$,
then there exists an open $x$-\nb\ $U$ of $X$ such that $y\not\in U$.\\\\
We begin with the basic definitions and some elementary results.

\begin{lem}
If $G$ is a topological group (which is $T_0$), then $G$ is Hausdorff.
\end{lem}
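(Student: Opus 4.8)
The plan is to prove the statement in two stages: first I would upgrade the $T_0$ hypothesis to the assertion that every singleton is closed (the $T_1$ property), and then I would exploit the group structure to pass from $T_1$ to Hausdorff. The group structure is genuinely needed only for the second stage, since $T_1$ does not imply $T_2$ for arbitrary topological spaces.

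For the first stage I would argue directly from the stated form of the $T_0$ axiom. Fix $y\in G$ and suppose some $x\in\{y\}\clos$ satisfies $x\neq y$. By hypothesis there is then an open $x$-\nb\ $U$ with $y\notin U$; but $x\in\{y\}\clos$ means precisely that every open \nb\ of $x$ contains $y$, a contradiction. Hence $\{y\}\clos=\{y\}$ for every $y$, and in particular $\{e\}$ is closed, where $e$ denotes the identity. This part uses nothing beyond the topology, so it applies equally to the general spaces $X$ of the paper.

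For the second stage I would show that a topological group in which $\{e\}$ is closed is Hausdorff. The slick route is to observe that the map $m\colon G\times G\to G$, $m(a,b)=a^{-1}b$, is continuous, so the diagonal $\Delta=m^{-1}(\{e\})$ is closed in $G\times G$, and a space whose diagonal is closed in its square is Hausdorff. More explicitly, given $x\neq y$ I would separate them by hand: since $\{x^{-1}y\}$ is closed, $W:=G\setminus\{x^{-1}y\}$ is an open \nb\ of $e$, and by continuity of $(a,b)\mapsto ab^{-1}$ at $(e,e)$ there is a symmetric open \nb\ $V$ of $e$ with $VV\subseteq W$. Then $xV$ and $yV$ are disjoint open \nb{}s of $x$ and $y$, for an equation $xv_1=yv_2$ with $v_i\in V$ would give $x^{-1}y=v_1v_2^{-1}\in VV\subseteq W$, contradicting $x^{-1}y\notin W$.

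The only delicate point is this second stage, where homogeneity (left translations are homeomorphisms, so $xV$ and $yV$ are open) and the continuity of the group operations are both used; the production of the symmetric \nb\ $V$ with $VV\subseteq W$ is the one place where the argument relies on $G$ being a topological group rather than merely $T_1$. Everything else is a short unwinding of the definitions.
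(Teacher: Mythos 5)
Your second stage is correct and is, in substance, the paper's own argument: the paper produces a symmetric open $e$-\nb\ $U$ with $g_1\not\in Ug_2$, picks a symmetric open $e$-\nb\ $W$ with $W^2\subset U$, and separates by the translates $Wg_1$ and $Wg_2$ --- the same shrink-and-translate device as your $xV$ and $yV$ with $VV\subseteq W$. The problem is your first stage. You read the displayed $T_0$ axiom literally; as printed it indeed asks for an open $x$-\nb\ $U$ with $y\not\in U$, which is flatly incompatible with the hypothesis $x\in\{y\}\clos$, and so, as you observe, it would force \emph{every} space to be $T_1$. But that reading cannot be the intended one: it would trivialize the paper's entire setting, since the central examples, the primitive ideal spaces $\Prim\mcA$ of postliminal $\Cst$-algebras, are $T_0$ but in general not $T_1$ (the paper records $T_1$ only under the stronger hypothesis that $\mcA$ is liminal). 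The roles of $x$ and $y$ are evidently transposed by a typo; the intended axiom is the standard $T_0$ property, namely an open $y$-\nb\ not containing $x$. Under that reading your first stage collapses: $T_0$ does not imply $T_1$ for bare spaces (Sierpi\'nski space), and your remark that this step ``uses nothing beyond the topology'' is precisely where the error surfaces.

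The repair needs the group structure in the first stage as well: in a topological group the specialization preorder is \emph{symmetric}. If $x\in\{y\}\clos$, then $yx^{-1}$ lies in every open $e$-\nb\ $V$; replacing $V$ by the symmetric $e$-\nb\ $V\cap V^{-1}$ shows $xy^{-1}\in V$ as well, hence $y\in\{x\}\clos$. For $x\neq y$ this contradicts $T_0$, since any open set containing one of the points would contain the other; thus singletons are closed, and your second stage then finishes the proof verbatim. Equivalently, and closer to the paper's phrasing: starting from a $T_0$-separating open set for $g_1,g_2$, translation and intersection with the inverse yield a \emph{symmetric} open $e$-\nb\ $U$ with $g_1\not\in Ug_2$. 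This symmetrization is the hidden content of the paper's first sentence, and it is exactly the point your proposal skips.
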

\begin{proof}
Let $g_1\neq g_2$ be in $G$. Since $G$ is $T_0$, we can assume without
loss of generality that $g_2\not\in\{g_1\}\clos$. Then there exists a symmetric
open $e$-\nb\ $U$ of $G$ such that $g_1\not\in Ug_2$. Choose a symmetric
open $e$-\nb\ $W$ of $G$ such that $W^2\subset U$. Now it is easy to
see that $Wg_1$ and $Wg_2$ are disjoint open \nb s of $g_1$ and $g_2$
respectively.
\end{proof}

\noindent A $G$-space $X$ of a topological group $G$ consists of a
($T_0$-)space $X$ and a continuous map $G\times X\to X$ satisfying
$g\mdot (h\mdot x)=(gh)\mdot x$ and $e\mdot x=x$ for all $g,h\in G$
and $x\in X$. A $G$-space is called transitive if $X=G\mdot x$ for one
and hence for all $x\in G$. It is well-known that stabilizers are
closed, see e.g.~ Lemma~1 of~\cite{Blatt1}.

\begin{lem}[Robert J.~Blattner, 1965]\hfill\\
\label{Glimm_lem:closed_stabilizer}
Let $X$ be a $G$-space and $x\in X$ arbitrary. Then $G_x=\{g\in G:g\mdot x=x\}$
is a closed subgroup of $G$.
\end{lem}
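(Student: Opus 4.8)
The plan is to show the complement of $G_x$ in $G$ is open by exhibiting, around each point not fixing $x$, an open neighborhood of maps that also fail to fix $x$. The key tool is the continuity of the action map $a\colon G\times X\to X$, combined with the fact that $X$ is $T_0$.

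First I would take $g\in G$ with $g\notin G_x$, so that $y:=g\mdot x\neq x$. Since $X$ is $T_0$, the two distinct points $x$ and $y$ can be topologically separated: there is an open set $U$ containing exactly one of them but not the other. The natural approach is to produce an open neighborhood $W$ of $g$ in $G$ such that $W\mdot x$ misses $x$ (or misses $y$, depending on how one sets things up), which shows $W\subset G\setminus G_x$ and hence $G\setminus G_x$ is open.

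The cleanest route uses the continuity of the orbit map $x$-section $\alpha_x\colon G\to X$, $\alpha_x(h)=h\mdot x$, which is continuous as the composition of $h\mapsto(h,x)$ with $a$. Then $G_x=\alpha_x^{-1}(\{x\})$. If $X$ were Hausdorff the one-point set $\{x\}$ would be closed and we would be done immediately by continuity; the subtlety is that in a merely $T_0$ space $\{x\}$ need not be closed. So the main obstacle is precisely that points are not automatically closed. I would circumvent this by working pointwise: for $g\notin G_x$ use the $T_0$ axiom to separate $g\mdot x$ from $x$ by an open set $U$ with $g\mdot x\in U$ but $x\notin U$ (if $g\mdot x\in\{x\}\clos$ use the axiom in one direction, otherwise in the other, and one checks both cases reduce to finding such a $U$), then pull back $U$ through $\alpha_x$ to get an open neighborhood of $g$ landing outside $G_x$.

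To finish I would verify $G_x$ is a subgroup, which is routine: $e\mdot x=x$ gives $e\in G_x$; if $g,h\in G_x$ then $(gh)\mdot x=g\mdot(h\mdot x)=g\mdot x=x$ and $g^{-1}\mdot x=g^{-1}\mdot(g\mdot x)=(g^{-1}g)\mdot x=x$, so $G_x$ is closed under products and inverses. The only genuinely topological content is the closedness, and there the care needed is in handling the non-Hausdorff setting through the $T_0$ separation rather than appealing to closedness of points; since $\alpha_x$ is continuous, the preimage of any set separating $x$ from its non-fixed images yields the required open neighborhoods in the complement.
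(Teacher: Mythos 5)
Your subgroup verification and the favorable case are fine, but the step ``one checks both cases reduce to finding such a $U$'' conceals a genuine gap. The $T_0$ axiom only provides an open set containing \emph{one} of the two distinct points $x$ and $g\mdot x$, and you cannot choose which one. In the case $g\mdot x\in\{x\}\clos$ the set you want does not exist at all: by definition of closure, \emph{every} open set containing $g\mdot x$ also contains $x$, so there is no open $U$ with $g\mdot x\in U$ and $x\notin U$, and the pullback through $\alpha_x$ cannot be performed. What $T_0$ gives you in this case is the opposite separation, an open $V$ with $x\in V$ and $g\mdot x\notin V$; but $\alpha_x^{-1}(V)$ is then an open set containing $e$ and all of $G_x$ while missing $g$ --- the wrong direction for closedness (you need a neighborhood of $g$ missing $G_x$, not a neighborhood of $G_x$ missing $g$). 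The case is not vacuous: for $X=\mZ$ with the open sets $\{m:m\ge n\}$ and $G=\mZ$ acting by translation, one has $g\mdot x\in\{x\}\clos$ for every negative translation $g$, so any complete proof must deal with it.

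The missing idea is a second continuous map exploiting the group structure, which is exactly what the paper's net computation encodes. Given $V\ni x$ open with $g\mdot x\notin V$, set $\beta\colon G\to X$, $\beta(h)=(gh^{-1})\mdot x$; this is continuous, $\beta(g)=x\in V$, while $\beta(h)=g\mdot x\notin V$ for every $h\in G_x$, so $\beta^{-1}(V)$ is an open neighborhood of $g$ disjoint from $G_x$. With this repair your open-complement argument becomes a correct dual of the paper's proof: the paper takes a net $g_\lambda\to g$ in $G_x$ and derives both $g\mdot x\in\{x\}\clos$ (from $g_\lambda\mdot x\to g\mdot x$) and $x\in\{g\mdot x\}\clos$ (from $g\mdot g_\lambda^{-1}\mdot x\to x$), then invokes $T_0$ to conclude $g\mdot x=x$. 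Your version as written reproduces only the first of these two limits; the map $\beta$ supplies the second.
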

\begin{proof}
Obviously $G_x$ is a subgroup. Let $g_\lambda$ be a net in $G_x$ which
converges to $g\in G$. Then $x=g_\lambda\mdot x\to g\mdot x$ so that
$g\mdot x\in\{x\}\clos$. On the other hand, $g\mdot x=g\mdot g_\lambda^{-1}
\mdot x\to x$ so that $x\in\{g\mdot x\}\clos$. Since $X$ is $T_0$, it
follows $g\mdot x=x$. Thus $G_x$ is closed.
\end{proof}

\begin{lem}\label{Glimm_lem:GmodH_is_Hausdorff}
If $H$ is a closed subgroup of a topological group $G$, then $G/H$ is
Hausdorff and the quotient map $\pi:G\onto G/H$ is open.
\end{lem}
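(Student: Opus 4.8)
The plan is to establish the two assertions separately, beginning with the openness of $\pi$, which is the routine part, and then using it to deduce the Hausdorff property. First I would show that $\pi$ is open. For an open set $U\subseteq G$ I would compute the saturation $\pi^{-1}(\pi(U))=UH=\bigcup_{h\in H}Uh$. Since right translation $g\mapsto gh$ is a homeomorphism of $G$, each $Uh$ is open, and hence so is the union $UH$. By the very definition of the quotient topology on $G/H$ this forces $\pi(U)$ to be open, so $\pi$ carries open sets to open sets.

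For the Hausdorff property I would separate two distinct points $g_1H\neq g_2H$ of $G/H$ directly, in the spirit of the proof that $G$ itself is Hausdorff. The key observation is that the set $g_1Hg_2^{-1}$ is closed --- it is obtained from the closed subgroup $H$ by applying the homeomorphisms of left translation by $g_1$ and right translation by $g_2^{-1}$ --- and that it does not contain the identity $e$, since $e\in g_1Hg_2^{-1}$ would force $g_1^{-1}g_2\in H$, i.e.\ $g_1H=g_2H$, contrary to assumption. Consequently $U:=G\setminus g_1Hg_2^{-1}$ is an open $e$-\nb. Using the continuity of multiplication at $(e,e)$ I would then choose a symmetric open $e$-\nb\ $W$ with $W^2\subseteq U$, that is, with $W^2\cap g_1Hg_2^{-1}=\emptyset$.

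It then remains to verify that $\pi(Wg_1)$ and $\pi(Wg_2)$ are the desired disjoint open \nb s of $g_1H$ and $g_2H$. Openness is immediate from the first part, and a short coset computation yields disjointness: if $w_1g_1H=w_2g_2H$ with $w_1,w_2\in W$, then $g_1^{-1}(w_1^{-1}w_2)g_2\in H$, whence $w_1^{-1}w_2\in g_1Hg_2^{-1}$; but $w_1^{-1}w_2\in W^2$ by symmetry of $W$, contradicting $W^2\cap g_1Hg_2^{-1}=\emptyset$. I expect the main obstacle to be purely a matter of bookkeeping --- keeping the left and right sides of the cosets consistent throughout, and confirming that $g_1Hg_2^{-1}$ is genuinely closed and avoids $e$ --- rather than any conceptual difficulty; once $g_1Hg_2^{-1}$ is identified as the right set to separate away from $e$, the argument closes quickly.
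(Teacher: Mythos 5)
Your proof is correct and takes essentially the same route as the paper: the paper chooses an open $e$-neighborhood $U$ with $Ug_1\cap g_2H=\emptyset$ (equivalently, avoiding the closed translate $g_2Hg_1^{-1}$, which is just the inverse of your set $g_1Hg_2^{-1}$), then takes a symmetric $W$ with $W^2\subset U$ and performs the same coset computation. The only difference is that you spell out the openness of $\pi$ via $\pi^{-1}(\pi(U))=UH$, which the paper dismisses as obvious.
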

\begin{proof}
Obviously $\pi$ is open. Let $g_1,g_2\in G$ such that $g_1H\neq g_2H$.
As $g_2H$ is closed and $g_1\not\in g_2H$, there is an open $e$-\nb\ $U$
of $G$ such that $Ug_1\cap g_2H=\emptyset$. Choose a symmetric open
$e$-\nb\ $W$ of $G$ such that $W^2\subset U$. Now it is easy to see that
$Wg_1H\cap Wg_2H=\emptyset$ so that $\pi(Wg_1)$ and $\pi(Wg_2)$ are
disjoint open \nb s of $g_1H$ and $g_2H$ in $G/H$ respectively.
\end{proof}

\noindent A topological space $X$ is quasi-compact if every open
cover of $X$ admits a finite subcover. We say that $X$ is locally
quasi-compact if every $x$-\nb\ of $X$ contains a quasi-compact
$x$-\nb. Further $X$ is (locally) compact if $X$ is (locally)
quasi-compact and Hausdorff. If $H$ is a closed subgroup of a
(locally) compact group $G$, then $G/H$ is (locally)
compact, too.\\\\
A subset $A$ of a topological space $X$ is said to be locally closed if
$A$ is the intersection of a closed and an open subset of $X$.
This is the case if and only if $A$ is relatively open in its
closure, or equivalently, if every $x\in A$ has an open \nb\ $U$ such
that $U\cap\overline{A}\subset A$.

\begin{lem}
 Let $H$ be a subgroup of a topological group $G$. Then $H$ is locally
 compact in the relative topology of $G$ if and only if $H$ is closed in~$G$.
\end{lem}
\begin{proof}
 Suppose that $H$ is locally compact. Let $g\in H$ be arbitrary. Let $U$ be an
 open $g$-\nb\ of $G$ such that the closure of $U\cap H$ in $H$ is compact. Since
 $G$ is Hausdorff, it follows that the closure of $U\cap H$ in $G$ is contained
 in $H$. This shows $U\cap\bar{H}\subset H$. This means that $H$ is a locally
 closed subset of $G$, i.e., $H$ is an open subgroup of its closure $\bar{H}$
 in $G$. This implies that $H$ is a closed subgroup of $G$. The
 converse is obvious.
\end{proof}

\noindent It is necessary to distinguish carefully between transitive
$G$-spaces and homogeneous spaces.

\begin{defn}\label{Glimm_defn:homogeneous_space}
Let $G$ be a topological group and $X$ a transitive $G$-space. We
say that $X$ is a homogeneous space if $\omega_x:G/G_x\to X$,
$\omega_x(gG_x)=g\mdot x$ is a homeomorphism for one and
hence for all $x\in X$.
\end{defn}

\noindent Lemma~\ref{Glimm_lem:GmodH_is_Hausdorff} implies that
every homogeneous space is Hausdorff.\\\\
Let $X$ be a $G$-space and $x\in X$. The orbit $G\mdot x$ is
a transitive $G$-space. By definition $G\mdot x$ is a homogeneous
space if and only if $\omega_x:G/G_x\to G\mdot x$
is a homeomorphism with respect to the relative topology of
$X$ on $G\mdot x$.

\begin{lem}\label{Glimm_defn:characterize_homogeneous_spaces}
Let $X$ be a transitive $G$-space of a topological group $G$. Then $X$
is a homogeneous space if and only if $N\mdot x$ is an $x$-neighborhood
of $X$  for every $e$-neighbor\-hood $N$ of $G$ and every $x\in X$.
\end{lem}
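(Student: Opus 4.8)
The plan is to reduce the statement to the assertion that the continuous bijection $\omega_x$ is an open map, and then to convert openness of its associated orbit map into the neighborhood condition by exploiting that every single group element acts on $X$ as a homeomorphism.

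First I would record the standing facts about $\omega_x$. Since the action map $G\times X\to X$ is continuous, the orbit map $p_x:G\to X$, $p_x(g)=g\mdot x$, is continuous; as it factors as $p_x=\omega_x\circ\pi$ through the quotient $\pi:G\onto G/G_x$, the universal property of the quotient topology shows that $\omega_x$ is continuous. Transitivity gives surjectivity of $\omega_x$, and $g\mdot x=h\mdot x\iff h^{-1}g\in G_x\iff gG_x=hG_x$ gives injectivity. Hence $\omega_x$ is always a continuous bijection, so "$X$ is a homogeneous space" is equivalent to "$\omega_x$ is an open map". Recall also that homogeneity is a property holding for one and hence for every $x$, so in one direction I may use openness for all $x$, while in the other it suffices to treat a single $x$.

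Next I would convert openness of $\omega_x$ into openness of the orbit map $p_x$. By Lemma~\ref{Glimm_lem:GmodH_is_Hausdorff} the quotient map $\pi$ is open, and every open subset of $G/G_x$ has the form $\pi(U)$ with $U\subseteq G$ open; since $\omega_x(\pi(U))=U\mdot x$, the map $\omega_x$ is open if and only if $U\mdot x$ is open in $X$ for every open $U\subseteq G$, that is, if and only if $p_x$ is open. For the forward implication I would assume $X$ homogeneous, so $p_x$ is open for every $x$; given an $e$-\nb\ $N$, I choose an open $e$-\nb\ $U\subseteq N$, whence $U\mdot x$ is open, contains $x=e\mdot x$, and lies in $N\mdot x$, so $N\mdot x$ is an $x$-\nb. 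For the converse I fix $x$, assume $N\mdot x$ is an $x$-\nb\ for every $e$-\nb\ $N$, and show $p_x$ is open. Let $U\subseteq G$ be open and $g\in U$; put $V=g^{-1}U$, an open $e$-\nb. The key identity is
\[
 U\mdot x=(gV)\mdot x=g\mdot(V\mdot x),
\]
where the point is that $y\mapsto g\mdot y$ is a homeomorphism of $X$, with inverse $y\mapsto g^{-1}\mdot y$. Since $V\mdot x$ is an $x$-\nb\ by hypothesis, its image $g\mdot(V\mdot x)=U\mdot x$ is a \nb\ of $g\mdot x$. As $g\in U$ was arbitrary, $U\mdot x$ is open, so $p_x$ is open and $X$ is homogeneous.

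I expect the only genuine subtlety to be the reduction step: using that $\pi$ is open to identify the open sets of $G/G_x$ and thereby pass from openness of $\omega_x$ to openness of $p_x$. The remaining work — the translation identity $U\mdot x=g\mdot(V\mdot x)$ combined with the homeomorphism property of the $g$-action — is short, and it is precisely what allows the neighborhood hypothesis, stated only at the base point $x$, to propagate to openness of the entire orbit map.
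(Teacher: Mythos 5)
Your proof is correct. The paper states this lemma without proof, so there is no argument of the author's to compare against; your route --- reducing homogeneity to openness of the orbit map $p_x$ via the openness of $\pi:G\onto G/G_x$, then using the translation identity $U\mdot x=g\mdot(V\mdot x)$ with the $g$-action a homeomorphism --- is the standard one and soundly fills the gap, and it even establishes the slightly stronger fact that the neighborhood hypothesis at a single point $x$ already suffices.
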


\noindent In the sequel we will investigate whether the orbit space $G\setminus X$
of a $G$-space $X$ has nice topological properties. In particular we
shall be concerned with the question if its orbits are locally closed
in~$X$ and locally compact in the relative topology of $X$.

\subsection*{Baire spaces and the open mapping theorem}
Our aim is to prove two variants of the open mapping theorem. To this end 
we recall basic facts about Baire spaces and almost Hausdorff spaces.

\begin{defn}\label{Glimm_defn:Baire_spaces}
A topological space $X$ is called a Baire space if the intersection
$\bigcap_{n=1}^\infty U_n$ of any sequence of dense open subsets $U_n$
of $X$ is dense in $X$.
\end{defn}

\noindent It is easy to see that $X$ is a Baire space if and only if
the union $\bigcup_{n=1}^\infty A_n$ of any sequence $A_n$ of closed
subsets with empty interior has empty interior. This contraposition
of the assertion in Definition~\ref{Glimm_defn:Baire_spaces} is used
frequently.

\begin{defn}
 A topological space $X$ is called totally Baire if every closed subspace
 of~$X$ is Baire.
\end{defn}

\begin{defn}\label{Glimm_defn:almost_Hausdorff}
A topological space $X$ is called almost Hausdorff if every non-empty,
closed subset $A$ of $X$ contains a non-empty, relatively open
Hausdorff subset.
\end{defn}

\noindent A singleton in $X$ is simply a one-point subset $\{x\}$ of $X$.
First properties of almost Hausdorff spaces are

\begin{lem}\label{Glimm_lem:almost_Hausdorff_spaces}
Let $X$ be an almost Hausdorff space.
\begin{enumerate}
\item Singletons are locally closed in $X$. In particular $X$ is $T_0$.
\item \textbf{Every} non-empty subset $B$ of $X$ contains a non-empty,
relatively open Hausdorff subset, and is hence itself almost
Hausdorff.
\item There exists a dense open Hausdorff subset of $X$.
\end{enumerate}
\end{lem}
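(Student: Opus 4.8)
The plan is to treat the three assertions in turn, each time applying the defining property (Definition~\ref{Glimm_defn:almost_Hausdorff}) to a carefully chosen closed set. For (i) I would apply almost Hausdorffness to the closed set $A=\overline{\{x\}}$, obtaining a non-empty relatively open Hausdorff subset $V$ of $A$. Since $\{x\}$ is dense in $A$ by construction, every non-empty relatively open subset of $A$ contains $x$; in particular $x\in V$ and $\{x\}$ is dense in $V$. But $V$ is Hausdorff, hence $T_1$, so $\{x\}$ is closed in $V$; being at once dense and closed it must equal $V$. Thus $\{x\}=V$ is relatively open in its closure $A$, which is exactly the criterion for $\{x\}$ to be locally closed recorded after the definition of local closedness. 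The $T_0$ property then drops out at once: given $x\neq y$, either $y\notin\overline{\{x\}}$, so that $X\setminus\overline{\{x\}}$ is an open set containing $y$ but not $x$, or $y\in\overline{\{x\}}$, in which case the open neighbourhood $U$ of $x$ with $U\cap\overline{\{x\}}=\{x\}$ excludes $y$.

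For (ii) I would pass to $\overline B$ and apply the definition to this non-empty closed set, obtaining a relatively open Hausdorff subset $V=W\cap\overline B$ with $W$ open in $X$. As $B$ is dense in $\overline B$, the non-empty open set $V$ meets $B$, and $V\cap B=W\cap B$ is non-empty, relatively open in $B$, and Hausdorff as a subspace of $V$. This proves the first claim. That $B$ is itself almost Hausdorff is then immediate, since a non-empty closed subset of $B$ is in particular a non-empty subset of $X$, to which the claim just proved applies and furnishes the required relatively open Hausdorff piece.

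For (iii) I would invoke Zorn's lemma on the family of open Hausdorff subspaces of $X$, ordered by inclusion. The point making this legitimate is that the union of a chain of open Hausdorff subsets is again open and Hausdorff: any two of its points already lie in a single member of the chain, where they can be separated. A maximal element $U$ then exists, and it remains to show $U$ is dense. If it were not, $F=X\setminus\overline U$ would be a non-empty open set, and by (ii) it would contain a non-empty subset $V'$ which is open in $X$ (as $F$ is open) and Hausdorff. Since $V'\subset F$ is disjoint from $\overline U$, and since $U$ is an open neighbourhood of each of its points disjoint from $V'$ and therefore from $\overline{V'}$, the pieces $U$ and $V'$ are separated by the disjoint open sets $U$ and $V'$. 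A short case analysis then shows $U\cup V'$ is open and Hausdorff, strictly larger than $U$, contradicting maximality; hence $U$ is dense.

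I expect the genuine obstacle to lie in part (iii). One is tempted to simply take the union of all open Hausdorff subsets, but such a union need not be Hausdorff: the line with two origins is almost Hausdorff yet is the union of two open Hausdorff copies of $\mR$. The separatedness observation $U\cap\overline{V'}=\emptyset$, valid because $U$ is an open neighbourhood of each of its points disjoint from $V'$, is precisely what keeps the enlargement Hausdorff and drives the maximality contradiction; everything else is routine manipulation of the density relation between a set and its closure.
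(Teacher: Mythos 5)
Your proof is correct and follows essentially the same route as the paper's: part (i) applies the definition to $\overline{\{x\}}$ and uses Hausdorffness of the relatively open piece to identify it with $\{x\}$, part (ii) passes to $\overline{B}$ and intersects the relatively open Hausdorff set with $B$, and part (iii) is the same Zorn maximality argument, enlarging a maximal open Hausdorff $U$ by a non-empty open Hausdorff $V'\subset X\setminus\overline{U}$. Your added details (verifying the chain condition for Zorn and the two-origins example showing the union of all open Hausdorff subsets can fail) merely make explicit what the paper leaves implicit.
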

\begin{proof}\hfill
\begin{enumerate}\renewcommand{\labelenumi}{\textit{\arabic{enumi}}.}
\item Let $x\in X$ be arbitrary. Since $X$ is almost Hausdorff, there
exists an open subset $U$ of $X$ such that $U\cap\{x\}\clos$ is non-empty
and Hausdorff. Obviously $x\in U$ because $U$ is open, and
$U\cap\{x\}\clos=\{x\}$ because $U\cap\{x\}\clos$ is Hausdorff. Thus
$\{x\}$ is locally closed.\\\\
Let $x,y\in X$ be arbitrary. If $y\not\in\{x\}\clos$, then we are
done because $X\setminus\{x\}\clos$ is an open \nb\ of $y$ not
containing $x$. So we can assume $y\in\{x\}\clos$. Since $\{x\}$ is
locally closed, there exists an open $x$-\nb\ of $X$ such that
$U\cap\{x\}\clos=\{x\}$ does not contain $y$. Hence $X$ is $T_0$.
\item Let $B$ be an arbitrary subset of $X$ and $A$ its closure in $X$.
Since $X$ is almost Hausdorff, there is a non-empty, relatively open
Hausdorff subset $U$ of $A$. Then $U\cap B$ is a  non-empty,
relatively open Hausdorff subset of $B$.
\item Zorn's Lemma shows that there exists a maximal non-empty, open
Hausdorff subset $U$ of $X$. Suppose that $U$ is not dense in $X$.
Then there exists a non-empty, open Hausdorff subset $V$ of
$X\setminus\overline{U}$. Now $U\cup V$ is open in $X$ and
Hausdorff, in contradiction to the maximality of $U$.
This proves our claim.
\end{enumerate}
\end{proof}

\noindent A $G_\delta$-subset of a topological space is a countable intersection
of open subsets.

\begin{lem}\label{Glimm_lem:locally_compact_implies_Baire}
If $W$ is a $G_\delta$-subset of a locally compact space $X$, then $W$
is a Baire space in the relative topology of $X$.
\end{lem}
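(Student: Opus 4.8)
The plan is to exploit the two features of a locally compact space that matter here: it is Hausdorff, and every point possesses a neighborhood basis of quasi-compact (hence compact) closed neighborhoods. Writing $W=\bigcap_{m=1}^\infty O_m$ with each $O_m$ open in $X$, I would unwind the definition of a Baire space in its contrapositive-free form: given a sequence $U_n$ of dense open subsets of $W$ and an arbitrary non-empty open subset $V$ of $W$, it suffices to produce a point of $V\cap\bigcap_{n=1}^\infty U_n$. Since the topology on $W$ is relative, write $V=W\cap V'$ and $U_n=W\cap U_n'$ with $V'$ and the $U_n'$ open in $X$. The whole argument then takes place in $X$, where local compactness is available.

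The heart of the proof is a nested-neighborhood construction producing a decreasing sequence of non-empty open sets $N_n$ in $X$ with compact closures. I would build them inductively so that at every stage $N_n\cap W\neq\emptyset$ and $\overline{N_n}\subseteq N_{n-1}\cap O_n\cap U_n'$, starting from $N_0:=V'$ (which meets $W$ in the non-empty set $V$). For the inductive step, I use that $U_n$ is dense in $W$: since $N_{n-1}\cap W$ is a non-empty open subset of $W$, it meets $U_n$, so there is a point $x_n\in N_{n-1}\cap W\cap U_n'$. Because $x_n\in W\subseteq O_n$, the set $N_{n-1}\cap O_n\cap U_n'$ is a non-empty open $x_n$-\nb\ of $X$, and local compactness lets me choose an open $N_n\ni x_n$ with $\overline{N_n}$ compact and $\overline{N_n}$ contained in it. The inclusion $x_n\in N_n\cap W$ guarantees that the next step can again invoke density.

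Finally I would extract the desired point by the finite intersection property: the $\overline{N_n}$ form a decreasing sequence of non-empty compact sets, so $\bigcap_{n=1}^\infty\overline{N_n}$ contains some $p$. The containment $\overline{N_n}\subseteq O_n$ forces $p\in\bigcap_{n=1}^\infty O_n=W$, while $\overline{N_n}\subseteq U_n'$ gives $p\in W\cap U_n'=U_n$ for every $n$, and $\overline{N_1}\subseteq V'$ gives $p\in W\cap V'=V$; hence $p\in V\cap\bigcap_{n=1}^\infty U_n$, as required. I expect the main obstacle to be bookkeeping rather than conceptual: one must arrange simultaneously that each $N_n$ still meets $W$ (so that density of the next $U_{n+1}$ applies) and that $\overline{N_n}\subseteq O_n$ (so that the limit point is actually trapped inside $W$ rather than merely in $X$). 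Keeping these two demands compatible through the induction is the only delicate point.
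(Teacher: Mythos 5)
Your proof is correct and takes essentially the same route as the paper's: both interleave the dense open sets with the open sets witnessing the $G_\delta$ in a nested sequence of compact sets and conclude via the finite intersection property. The only difference is bookkeeping --- you track open sets $N_n$ with compact closure and $N_n\cap W\neq\emptyset$, whereas the paper tracks compact sets $K_n$ with ${\rm int}(K_n)\cap W\neq\emptyset$, which is the same invariant in disguise.
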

\begin{proof}
Let $U_n$ be a sequence of open subsets of $X$ such that $U_n\cap W$
is dense in $W$. Let $V$ be an open subset of $X$ such that
$V\cap W\neq\emptyset$. Since $W$ is a $G_\delta$, there exists a
sequence $W_n$ of open subsets of $X$ such that $W=\bigcap_{n=1}^\infty W_n$.
By induction we will define a decreasing sequence of compact subsets
$K_n$ of $X$ such that ${\rm int}(K_n)\cap W\neq\emptyset$ and
$K_n\subset U_n\cap W_n$: Let $K_0\subset V$ be compact such that
${\rm int}(K_0)\cap W\neq\emptyset$. Suppose that $K_0,\ldots,K_{n-1}$
have been chosen. Since ${\rm int}(K_{n-1})\cap W\neq\emptyset$ and $U_n\cap W$
is dense in $W$, it follows that $K_{n-1}\cap U_n\cap W\neq\emptyset$.
As $X$ is locally compact, there exists a compact subset
$K_n\subset K_{n-1}\cap U_n\cap W_n$ such that ${\rm int}(K_n)\cap W
\neq\emptyset$. By the finite intersection property we obtain
$\bigcap_{n=1}^\infty K_n\neq\emptyset$ and hence
$(\bigcap_{n=1}^\infty U_n)\cap V\cap W\neq\emptyset$. For $V$ was
arbitrary, we see that $(\bigcap_{n=1}^\infty U_n)\cap W$ is
dense in $W$.
\end{proof}

\begin{lem}\label{Glimm_lem:Baire_spaces}
Let $X$ be a topological space.
\begin{enumerate}
\item If $X$ is Baire and $U$ is an open subset of $X$, then
$U$ is Baire.
\item If $U$ is a dense open subset of $X$ and if $U$ is Baire,
then $X$ is Baire.
\item If $W$ is a $G_\delta$-subset of a locally quasi-compact, almost
Hausdorff space $X$, then $W$ is Baire.
\item Every dense $G_\delta$-subset $W$ of a Baire space $X$ is Baire.
\end{enumerate}
\end{lem}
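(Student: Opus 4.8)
The plan is to treat the three assertions in order; the first two are routine manipulations with dense open sets, but they also furnish the gluing principles that drive the third. For assertion~(1), let $U$ be open in the Baire space $X$ and let $V_n$ be dense open subsets of $U$, so each $V_n$ is open in $X$. The idea is to fatten them up: set $W_n=V_n\cup(X\setminus\overline U)$. Each $W_n$ is open in $X$, and since the boundary $\overline U\setminus U$ has empty interior, $W_n$ is dense in $X$. Baireness of $X$ makes $\bigcap_n W_n$ dense in $X$, and intersecting with $U$, using $(X\setminus\overline U)\cap U=\emptyset$, gives $(\bigcap_n W_n)\cap U=\bigcap_n V_n$; density in $X$ then forces density in the open set $U$. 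For assertion~(2), let $U$ be dense, open, and Baire and let $V_n$ be dense open in $X$. Then $V_n\cap U$ is dense open in $U$, so $(\bigcap_n V_n)\cap U$ is dense in $U$ by hypothesis; since $U$ is dense in $X$, density in $U$ transports to density in $X$.

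From~(1) and~(2) I will extract two gluing principles. First, \emph{if $Z$ is covered by open Baire subspaces, then $Z$ is Baire}: any nonempty open $O\subset Z$ meets some member $A$ of the cover, the traces of a sequence of dense open sets of $Z$ are dense open in $A$, and Baireness of $A$ makes their intersection meet $O\cap A$. Second, \emph{if $A\subset Z$ is open and Baire and the complement $B=Z\setminus A$ is Baire, then $Z$ is Baire}: for nonempty open $O\subset Z$ either $O$ meets $A$ (handled as before), or $O$ lies in the interior $B^{\circ}$ of $B$, which is an open, hence by~(1) Baire, subset of $B$, and Baireness of $B^{\circ}$ finishes the job.

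The substance is assertion~(3), and the main obstacle is precisely that $X$ need not be Hausdorff, so Lemma~\ref{Glimm_lem:locally_compact_implies_Baire} cannot be applied to $X$ itself. To get around this I will use the almost Hausdorff hypothesis to stratify $X$ into Hausdorff, and hence locally compact, pieces. Define by transfinite recursion a strictly decreasing chain of closed sets: $C_0=X$; given $C_\alpha\neq\emptyset$, note that $C_\alpha$ is again locally quasi-compact (closed subspaces inherit this) and almost Hausdorff (Lemma~\ref{Glimm_lem:almost_Hausdorff_spaces}(2)), pick its maximal dense open Hausdorff subset $H_\alpha$ (Lemma~\ref{Glimm_lem:almost_Hausdorff_spaces}(3)), and set $C_{\alpha+1}=C_\alpha\setminus H_\alpha$; at limits put $C_\lambda=\bigcap_{\beta<\lambda}C_\beta$. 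Since the $C_\alpha$ strictly decrease, $C_\theta=\emptyset$ for some $\theta$. Writing $X_\alpha=X\setminus C_\alpha$, these are increasing open sets with $X_\theta=X$ and $X_\lambda=\bigcup_{\beta<\lambda}X_\beta$ at limits, and $X_{\alpha+1}\setminus X_\alpha=H_\alpha$; being open in the locally quasi-compact $C_\alpha$ and Hausdorff, each $H_\alpha$ is locally compact.

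I then prove by transfinite induction that $W\cap X_\alpha$ is Baire for every $\alpha$; taking $\alpha=\theta$ yields that $W=W\cap X$ is Baire. The case $\alpha=0$ is the empty space. At a successor, $A=W\cap X_\alpha$ is open and Baire by induction, while its complement in $W\cap X_{\alpha+1}$ is $B=W\cap H_\alpha$, a $G_\delta$ subset of the locally compact space $H_\alpha$ and hence Baire by Lemma~\ref{Glimm_lem:locally_compact_implies_Baire}; the second gluing principle gives that $W\cap X_{\alpha+1}$ is Baire. At a limit, $W\cap X_\lambda=\bigcup_{\beta<\lambda}(W\cap X_\beta)$ is covered by the open Baire subspaces $W\cap X_\beta$, so the first gluing principle applies. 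The delicate points to get right are that closed subspaces of a locally quasi-compact space remain locally quasi-compact, that each stratum $H_\alpha$ is genuinely locally compact, and that $W$ restricts to a $G_\delta$ on every stratum, so that Lemma~\ref{Glimm_lem:locally_compact_implies_Baire} is legitimately available at each successor step.
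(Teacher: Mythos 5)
Your parts \textit{(1)} and \textit{(2)} coincide with the paper's proofs. For part \textit{(3)} you take a genuinely different route: the paper works in a single layer, picking one dense open Hausdorff subset $U$ of $X$ (part \textit{3.}\ of Lemma~\ref{Glimm_lem:almost_Hausdorff_spaces}), noting that $U$ is locally compact and $U\cap W$ is a $G_\delta$ in $U$, hence Baire by Lemma~\ref{Glimm_lem:locally_compact_implies_Baire}, and then applying part \textit{(2)} to the dense open subset $U\cap W$ of $W$ --- whereas you exhaust $X$ by a transfinite sequence of locally compact Hausdorff strata $H_\alpha$ and glue with two patching principles. Your argument is correct: the gluing principles hold, closed subspaces do inherit local quasi-compactness, each $H_\alpha$ is locally compact, and $W$ traces to a $G_\delta$ on each stratum. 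What your heavier machinery buys is exactly the point where the paper is terse: the density of $U\cap W$ in $W$ is not automatic for an arbitrary $G_\delta$ (in the line with doubled origin, $W=\{0,0'\}$ is a $G_\delta$ with discrete subspace topology, and any maximal dense open Hausdorff $U$ meets it in only one of the two points); it becomes true after the harmless normalization of replacing $X$ by $\overline{W}$, which is again locally quasi-compact and almost Hausdorff and contains $W$ as a \emph{dense} $G_\delta$, since a dense open set then traces densely on the dense subset $W$. With that one-line reduction the paper's single-layer proof does in one step what your transfinite induction does in $\theta$ steps; your version is longer, but it needs no normalization, never uses density of $W$, and makes the non-Hausdorff bookkeeping completely explicit.
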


\begin{proof}\hfill
\begin{enumerate}\renewcommand{\labelenumi}{\textit{\arabic{enumi}}.}
\item Let $U_n$ be dense and open subsets of $U$. Then $V_n=U_n\cup
(X\setminus\overline{U})$ is dense and open in $X$. Since $X$ is Baire,
it follows that $\bigcap_{n=1}^\infty V_n$ is dense in $X$. Now it clear that
$\bigcap_{n=1}^\infty U_n$ is dense in $U$.
\item Let $U_n$ be dense and open subsets of $X$. Then $U_n\cap U$ is dense
and open in $U$. Since $U$ is Baire, the subset $(\bigcap_{n=1}^\infty U_n)\cap U$
is dense in $U$. It follows that $\bigcap_{n=1}^\infty U_n$ is dense in $X$.
\item Since $X$ is almost Hausdorff, there exists a dense open Hausdorff subset
$U$ of $X$ by part \textit{3.}\ of Lemma~\ref{Glimm_lem:almost_Hausdorff_spaces}.
Note that $U$ is locally compact. Since $U\cap W$ is a $G_\delta$-subset
of $U$, it follows that $U\cap W$ is Baire by Lemma~
\ref{Glimm_lem:locally_compact_implies_Baire}. For $U\cap W$ is dense
in $W$, we see that $W$ is Baire by part~\textit{2.}\ of this lemma.
\item Let $U_n$ be a sequence of open subsets of $X$ such that $U_n\cap W$
is dense in $W$. As $W$ is dense, it follows that $U_n$ is dense in $X$ for
all~$n$. Since $W$ is a $G_\delta$-subset of~$X$, there exists a sequence of
open subsets of~$X$ such that $W=\cap_{n=1}^\infty W_n$. Note that $W_n$ is
dense in $X$ for all $n$. Now the assumption that $X$ is Baire implies that
$(\cap_{n=1}^\infty U_n)\cap W=\cap_{n=1}^\infty(U_n\cap W_n)$
is dense in~$X$ and thus dense in~$W$. This proves that $W$ is Baire.
\end{enumerate}
\end{proof}

\begin{lem}\label{Glimm_lem:almost_Hausdorff_locally_quasi_compact_implies_Baire}
 Every almost Hausdorff locally quasi-compact space $X$ is totally Baire.
\end{lem}
\begin{proof}
Let $A$ be an arbitrary closed subset of $X$. Since $X$ is almost Hausdorff,
there exists a dense open Hausdorff subset $U$ of $A$. Since $X$ is locally
quasi-compact, it follows that $U$ is locally compact. Now
Lemma~\ref{Glimm_lem:locally_compact_implies_Baire} implies that $U$ is
Baire. Since $U$ is dense in $A$, it follows by part \textit{2.} of
Lemma~\ref{Glimm_lem:Baire_spaces} that $A$ is Baire.
\end{proof}

\noindent A Hausdorff topological space is $\sigma$-compact if it is a
countable union of compact subsets.\\\\
The following variants of the open mapping theorem involve Baire spaces.

\begin{thm}[Open mapping theorem I]\label{Glimm_thm:open_mapping_I}\hfill\\
Let $G$ be a locally compact group and $\varphi:X\to Y$ a $G$-equivariant continuous
map of a $\sigma$-compact homogeneous $G$-space $X$ onto a transitive $G$-space 
and Baire Hausdorff space $Y$. Then it follows that $\varphi$ is an open map.
\end{thm}
\begin{proof}
The proof is standard. Let $x\in X$ be arbitrary and $U$ an $x$-\nb. We must prove that
$\varphi(U)$ is a $\varphi(x)$-\nb. Translating $W$ we can assume $\varphi(x)\in W$.
Shrinking $U$ if necessary we can establish $\varphi(U)\subset W$. By continuity there
exists an $e$-\nb\ $N$ of $G$ such that $N\mdot x\subset U$.  Since $G$ is locally
compact, we can choose a compact symmetric $e$-\nb\ $L$ of $G$ such that $L^2\subset N$.\\
Since $X$ is $\sigma$-compact, there is a sequence $g_m\in G$ such that
$X=\cup_{n=0}^\infty\;g_mL\mdot x$. Thus $Y=\cup_{n=0}^\infty\;g_mL\mdot\varphi(x)$
because $\varphi$ is surjective and $G$-equivariant. As $Y$ is Hausdorff, the subsets
$g_mL\mdot\varphi(x)$ are closed in $Y$. Since $Y$ is a Baire space, there exists
$m\in\mN$ such that $g_mL\mdot\varphi(x)$ has non-empty interior. Let $y$ be an
interior point of~$g_mL\mdot\varphi(x)$ and write $y=g_mg\mdot\varphi(x)$ with $g\in L$.
Now it follows that $\varphi(x)$ is an interior point of~$g^{-1}L\mdot\varphi(x)$.
Since $g^{-1}L\mdot\varphi(x)\subset N\mdot\varphi(x) \subset\varphi(U)$, we see
that $\varphi(U)$ is a $\varphi(x)$-\nb. This proves $\varphi$ to be open.
\end{proof}

\begin{thm}[Open mapping theorem II]\label{Glimm_thm:open_mapping_II}\hfill\\
Let $G$ be a second countable locally compact group and $\varphi:X\to Y$ a $G$-equivariant
continuous map of a homogeneous $G$-space $X$ onto a transitive $G$-space and Baire
space~$Y$ which admits a non-empty open Hausdorff subset. Then $\varphi$ is an open
map.
\end{thm}
\begin{proof}
The proof is similar to the previous one. In addition, it uses an idea from~\cite{Will}.
Let $x\in X$ be arbitrary and $U$ an $x$-\nb. We must prove that $\varphi(U)$ is a
$\varphi(x)$-\nb. We can assume $\varphi(x)\in W$ and $\varphi(U)\subset W$. Let 
$N$ be an $e$-\nb\ of $G$ such that $N\mdot x\subset U$ and $L$ a compact symmetric
$e$-\nb\ of $G$ such that $L^2\subset N$.\\
Let $W$ be a non-empty open Hausdorff subset of $Y$. We consider the open subset
$W_0:=\{a\in G:\varphi(a\mdot x)\in W\}$ of $G$. Since $G$ is second countable,
there exist $g_m\in G$ such that $\{g_m:m\in\mN\}$ is dense in~$W_0$ and a
countable neighborhood basis $\{L_n\mdot x:m\in\mN\}$ of~$e$
in~$G$ satisfying $L_n\subset L$ for all $n$. Let $I$ denote the set of all
$(m,n)\in\mN\times\mN$ such that $g_mL_n\subset W_0$. We claim that
$W_0=\cup_{(m,n)\in I}g_mL_n$. Let $a\in W_0$ be arbitrary. There exists $n\in\mN$
such that $aL_n^{-1}L_n\subset W_0$ and $m\in\mN$ such that $g_m\in aL_n^{-1}$.
Now we obtain $a\in g_mL_n$ and $g_mL_n\subset aL_n^{-1}L_n\subset W_0$
proving the claim. This argument is adpapted from the proof of
\textit{(c)}$\Rightarrow$\textit{(d)} of Theorem~6.2 on p.~177 of~\cite{Will}.\\
Applying the maps $G\to X$, $a\mapsto a\mdot x$, and $\varphi$ we get
$W=\cup_{(m,n)\in I}g_mL_n\mdot\varphi(x)$. Note that the sets $g_mL_n\mdot\varphi(x)$
are closed in $W$ with respect to the relative topology. By part~\textit{1.}\ of
Lemma~\ref{Glimm_lem:Baire_spaces} we know that $W$ is a Baire space. Hence
there exists $(m,n)\in I$ such that $g_mL_n\mdot\varphi(x)$ has non-empty interior
in $W$ and consequently a non-empty interior in~$Y$. As in the proof
of Theorem~\ref{Glimm_thm:open_mapping_I} we now conclude that $\varphi(U)$ is a
$\varphi(x)$-\nb\ which proves $\varphi$ to be open.
\end{proof}

\noindent We emphasize that in Theorem~\ref{Glimm_thm:open_mapping_II} we
do not assume $Y$ to be Hausdorff. However, the assumption that $Y$ is a
transitive $G$-space and a Baire space which contains a non-empty open
Hausdorff subset implies that $Y$ is a homogeneous
space and hence Hausdorff.\\\\
A topological space $X$ is called Lindel\"of if every open cover of $X$ has
a countable subcover. A space $X$ is hereditary Lindel\"of if every open subset
of $X$ is Lindel\"of in the relative topology. Let $X$ be a locally compact space.
Then $X$ is hereditary Lindel\"of if and only if every open subset of $X$ can
be exhausted by a countable family of compact subsets. Clearly every second
countable space is hereditary Lindel\"of.

\begin{thm}[Open mapping theorem III]\label{Glimm_thm:open_mapping_III}\hfill\\
 Let $G$ be a locally compact group and $\varphi:X\to Y$  a $G$-equivariant
 continuous map of a hereditary Lindel\"of homogeneous space $X$
 onto a Baire space $Y$ which contains a non-empty open Hausdorff subset.
 Then $\varphi$ is an open map. In particular, $Y$ is a homogeneous space, too.
\end{thm}
\begin{proof}
The proof is similar to the previous one. Let $x\in X$ be arbitrary and $U$ an
$x$-\nb. We must prove that $\varphi(U)$ is a $\varphi(x)$-\nb. We can assume
$\varphi(x)\in W$ and $\varphi(U)\subset W$. Let $N$ be an $e$-\nb\ of $G$
such that $N\mdot x\subset U$ and $L$ a compact symmetric $e$-\nb\ of $G$
such that $L^2\subset N$.\\
Let $W$ be a non-empty open Hausdorff subset of $Y$. We consider the open subset
$W_0:=\varphi^{-1}(W)$ of $X$. Note that the family $\{{\rm int}(gL_0\mdot x):g\in G$
and $L_0$ is a compact $e$-\nb\ of $G$ contained in $L$ such that
$gL_0\mdot x\subset W_0\}$ is an open cover of $W_0$. Since $X$ is hereditary
Lindel\"of, we can find a countable subcover. Thus there exist $g_n\in G$ and
compact $e$-\nb s $L_n\subset L$ such that $W_0=\cup_{n=1}^\infty g_nL_n\mdot x$.\\
Applying $\varphi$ we get $W=\cup_{n\in\mN}g_nL_n\mdot\varphi(x)$. Note that
the sets $g_nL_n\mdot\varphi(x)$ are closed in~$W$ with respect to the relative
topology. By part~\textit{1.}\ of Lemma~\ref{Glimm_lem:Baire_spaces} we know
that $W$ is a Baire space. Hence there exists $n\in\mN$ such that
$g_nL_n\mdot\varphi(x)$ has a non-empty interior in~$W$ and consequently a
non-empty interior in~$Y$. As in the proof of Theorem~\ref{Glimm_thm:open_mapping_I}
we now conclude that $\varphi(U)$ is a $\varphi(x)$-\nb\ which proves
$\varphi$ to be open.\\\\
Finally we note that $G_x\subset G_{\varphi(x)}$ so that the natural projection
$G\to G/G_{\varphi(x)}$ factors to a continuous and open map
$\nu_x:G/G_x\onto G/G_{\varphi(x)}$. The commutative diagram
\begin{equation*}
\xymatrix{
G/G_x\ar[d]_{\omega_x}\ar[r]^{\nu_x}& G/G_{\varphi(x)}
\ar[d]^{\omega_{\varphi(x)}}\\
X\ar[r]^{\varphi} & Y}
\end{equation*}
\noindent shows us that $Y$ is a homogeneous space: Since $\nu_x$ is continuous,
$\varphi$ is an open continuous map and $\omega_x$ is a homeomorphism, it
follows that $\omega_{\varphi(x)}$ is a homeomorphism, too.
\end{proof}

\noindent If $G$ itself is hereditary Lindel\"of, then every homogeneous
$G$-space $X$ has this property. If $Y$ is almost Hausdorff and locally
quasi-compact, then $Y$ satisfies the assumption of the preceding theorem
by Lemma~\ref{Glimm_lem:almost_Hausdorff_locally_quasi_compact_implies_Baire}.

\subsection*{Glimm's Theorem}
In this section we will give a complete proof of Glimm's theorem which
contains necessary and sufficient condition for $G\setminus X$ to be almost
Hausdorff. The definition of almost Hausdorff spaces is due to to
J.\ Glimm, see~\cite{Glimm1}. A more distinct exposition can be
found in Section~7 of~\cite{Rief2} where M.\ Rieffel proves the
following neat result.

\begin{prop}[Marc A. Rieffel, 1979]
\label{Glimm_prop:locally_compact_implies_locally_closed}\hfill\\
Let $G$ be a topological group and $X$ a $G$-space. Let $x\in X$ be a point
such that the closure of $G\mdot x$ in $X$ contains a non-empty open Hausdorff
subset. If $G\mdot x$ is locally compact in the relative topology of $X$, then
$G\mdot x$ is locally closed in $X$.
\end{prop}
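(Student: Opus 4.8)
The plan is to show that the orbit $G\mdot x$, being locally compact in the relative topology, is relatively open in its closure $\overline{G\mdot x}$; this is precisely the characterization of local closedness given in the excerpt. So I would set $A=\overline{G\mdot x}$ (the closure taken in $X$) and aim to produce an open subset $U$ of $X$ with $x\in U$ and $U\cap A\subset G\mdot x$. By $G$-equivariance it then suffices to handle a single point, since translating $U$ by group elements covers the whole orbit.

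First I would observe that $A=\overline{G\mdot x}$ is itself almost Hausdorff: by part \textit{2.}\ of Lemma~\ref{Glimm_lem:almost_Hausdorff_spaces}, every subset of an almost Hausdorff space is almost Hausdorff, so in particular the closed set $A$ is. Moreover $A$ is a $G$-space in its own right, and $G\mdot x$ is a dense orbit in $A$. The key structural input is that $G\mdot x$, being locally compact and Hausdorff and carrying a transitive $G$-action, together with the map $\omega_x\colon G/G_x\to G\mdot x$, is a candidate homogeneous space; I would like to invoke the open mapping theorem (Theorem~\ref{Glimm_thm:open_mapping}) or its corollary to upgrade the abstract transitivity to genuine homogeneity. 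The natural route is: since $G\mdot x$ is locally compact Hausdorff, it is locally quasi-compact and almost Hausdorff, hence Baire by part \textit{3.}\ of Lemma~\ref{Glimm_lem:Baire_spaces} (applied with $W=G\mdot x$ viewed inside itself, a $G_\delta$ trivially), and a transitive Baire $G$-space is a homogeneous space by Corollary~\ref{Glimm_cor:open_mapping} once one arranges $\sigma$-compactness of $G$ — but since that hypothesis is absent here, I would instead argue directly that $N\mdot x$ is an $x$-neighborhood in $G\mdot x$ for every $e$-neighborhood $N$, using Lemma~\ref{Glimm_defn:characterize_homogeneous_spaces} and the local compactness via a Baire-category argument internal to $G\mdot x$.

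With homogeneity of $G\mdot x$ in hand, the crucial step is to compare neighborhoods in $G\mdot x$ with neighborhoods in $A$. I would pick a quasi-compact (hence compact, by Hausdorffness) neighborhood $C$ of $x$ in $G\mdot x$, say $C=\overline{W\mdot x}^{\,G\mdot x}$ for a suitable symmetric open $e$-neighborhood $W$ with $W^2$ small. The point is that $C$, being compact in $G\mdot x$, is compact in $X$ and therefore closed in the larger space $A$ (compact subsets of an almost Hausdorff space need not be closed in general, so here I would work on the Hausdorff locus). To localize, I would use part \textit{3.}\ of Lemma~\ref{Glimm_lem:almost_Hausdorff_spaces} to find a dense open Hausdorff subset of $A$; in a Hausdorff space compact sets are closed, which lets me separate $x$ from the ``boundary'' $A\setminus G\mdot x$ and produce the desired open $U$ with $U\cap A\subset G\mdot x$.

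The main obstacle I anticipate is the passage from local compactness of the orbit in its \emph{own} relative topology to a statement about its position inside the possibly non-Hausdorff closure $A$: a priori a compact neighborhood in $G\mdot x$ need not be a neighborhood, nor closed, in $A$, precisely because $A$ fails to be Hausdorff at the limit points in $A\setminus G\mdot x$. The resolution should be to exploit the dense open Hausdorff subset of $A$ guaranteed by almost Hausdorffness, restricting attention to where separation holds, and then to transport the conclusion back to all of $G\mdot x$ by $G$-equivariance, since $G$ acts by homeomorphisms on $A$ and the orbit is a single orbit. Controlling this interface carefully — ensuring the compact neighborhood genuinely traps the orbit away from its boundary — is where the real work lies.
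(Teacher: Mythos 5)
There is a genuine gap, and it lies exactly where you place the ``key structural input'': the claim that $G\mdot x$ is a homogeneous space. You correctly notice that Theorem~\ref{Glimm_thm:open_mapping} and Corollary~\ref{Glimm_cor:open_mapping} are unavailable because $G$ is not assumed $\sigma$-compact, but your proposed repair --- a ``Baire-category argument internal to $G\mdot x$'' showing $N\mdot x$ is an $x$-\nb\ for every $e$-\nb\ $N$ --- cannot work. The Baire argument in the open mapping theorem needs to cover the orbit by \emph{countably many} translates $g_nL\mdot x$, and that is precisely what $\sigma$-compactness of $G$ provides; local compactness of the orbit alone gives nothing of the sort. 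Indeed the statement you want is false at this level of generality: let $G=\mR$ with the \emph{discrete} topology act on $X=\mR$ by translations. Then $X$ is Hausdorff (hence almost Hausdorff), the orbit $G\mdot 0=\mR$ is locally compact and even closed, but $N\mdot 0=\{0\}$ for the $e$-\nb\ $N=\{e\}$, so the orbit is not a homogeneous space. The same example shows that your choice of compact \nb\ $C=\overline{W\mdot 0}$ built from a group \nb\ $W$ need not be a \nb\ in the orbit at all. So as structured --- homogeneity first, then the localization argument --- the proof is irreparable; the proposition is true for arbitrary topological groups precisely because homogeneity is never needed.

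The good news is that your final paragraphs, with the homogeneity detour deleted, are essentially the paper's proof. Replace $X$ by $\overline{G\mdot x}$ so the orbit is dense; take a non-empty open Hausdorff subset $U$ (part \textit{3.}\ of Lemma~\ref{Glimm_lem:almost_Hausdorff_spaces}); since the orbit is dense it meets $U$, and \emph{translating $U$ by a group element} (a point you gesture at but should make explicit) puts any prescribed orbit point $y$ inside a Hausdorff open set. Now local compactness of the orbit \emph{in its relative topology} directly furnishes a compact $y$-\nb\ $V\subset U\cap G\mdot x$ --- no group \nb s required --- and one chooses $W\subset U$ open with $W\cap G\mdot x$ equal to the interior of $V$ in $G\mdot x$. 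Your concern about compact sets failing to be closed in the non-Hausdorff closure is resolved exactly as you suspect: any net in $G\mdot x$ converging to a point $z\in W$ eventually lies in $V$, has a subnet converging in the compact set $V$, and uniqueness of limits in the Hausdorff set $U$ forces $z\in V\subset G\mdot x$, i.e.\ $W\cap\overline{G\mdot x}\subset G\mdot x$. This uses only density, the translated Hausdorff \nb, and compactness of $V$ --- never the Baire property or homogeneity of the orbit.
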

\begin{proof}
We can assume that $G\mdot x$ is dense in $X$. Let $y\in G\mdot x$ be arbitrary.
By assumption there exists a non-empty open Hausdorff subset $U$ of $X$. Clearly
$U\cap G\mdot x\neq\emptyset$. By translation of $U$ we can achieve
$y\in U$. Since $G\mdot x$ is locally compact, there is a compact $y$-\nb\ $V$
of $G\mdot x$ such that $V\subset U\cap G\mdot x$. Let $W$ be an open subset
of $U$ such that $W\cap G\mdot x$ is equal to the interior of $V$. We shall show that
$W\cap\overline{G\mdot x}\subset G\mdot x$: Let $z_\lambda$ be a net in $G\mdot x$
which converges to $z\in W$. Clearly $z_\lambda\in W\cap G\mdot x\subset V$ for
$\lambda\ge\lambda_0$. Since $V$ is compact, $z_\lambda$ has a subnet converging
to $z_0\in V$. For limits are unique in the Hausdorff subset $U$, it follows
$z=z_0\in G\mdot x$. This proves $G\mdot x$ to be locally closed.
\end{proof}

\noindent The open mapping theorem and the preceding proposition allow
us to characterize the well-behaved orbits of a totally Baire $G$-space under a
mild additional assumption of Hausdorffness. The next proposition is a slight
generalization of results of Glimm and Rieffel.

\begin{prop}
\label{Glimm_prop:characterization_of_smooth_orbits}
Let $G$ be a hereditary Lindel\"of locally compact group and $X$ a totally Baire 
$G$-space. Let $x\in X$ be a point such that the closure of the orbit $G\mdot x$
contains a non-empty open Hausdorff subset. Then there are equivalent:
\begin{enumerate}
\item The orbit $G\mdot x$ is a homogeneous space.
\item $G\mdot x$ is locally compact in the relative topology of $X$.
\item $G\mdot x$ is locally closed in $X$.
\item $G\mdot x$ is a $G_\delta$-subset of its closure.
\item $G\mdot x$ is a Baire space in the relative topology of $X$.
\end{enumerate}
\end{prop}
\begin{proof}
If the $G\mdot x$ is a homogeneous space, then $G\mdot x$ is locally compact
in the relative topology by Lemma~\ref{Glimm_lem:GmodH_is_Hausdorff}. This proves
$\textit{1.}\Rightarrow\textit{2.}$ Let $G\mdot x$ be locally compact. We assumed
that the closure of $G\mdot x$ contains a non-empty Hausdorff subset. Thus
Lemma~\ref{Glimm_prop:locally_compact_implies_locally_closed} implies that
$G\mdot x$ is locally closed. Hence $\textit{2.}\Rightarrow\textit{3.}$
Let $G\mdot x $ be locally closed. This means that $G\mdot x$ is open
in its closure. In particular, $G\mdot x$ is a $G_\delta$-subset of
its closure which shows $\textit{3.}\Rightarrow\textit{4.}$ Let $G\mdot x$
be a $G_\delta$-subset of $\overline{G\mdot x}$. Since $G\mdot x$ is dense
and $\overline{G\mdot x}$ is Baire as a closed subset of the totally Baire
space $X$, it follows from part~\textit{4.}\ of Lemma~\ref{Glimm_lem:Baire_spaces}
that $G\mdot x$ is Baire. This proves $\textit{4.}\Rightarrow\textit{5.}$
Let $G\mdot x$ be Baire. Since $G$ is hereditary Lindel\"of,
Theorem~\ref{Glimm_thm:open_mapping_III} implies that $G\mdot x$ is a
homogeneous space.
\end{proof}

\noindent Note that $\textit{5.}\Rightarrow\textit{1.}$ is the only implication
which requires $G$ to be hereditary Lindel\"of and locally compact.\\\\
In Proposition~\ref{Glimm_prop:characterization_of_smooth_orbits}
we encounter two different types of properties: The conditions
$G\mdot x$ is locally closed / a $G_\delta$-subset of its closure
concern the way in which $G\mdot x$ is situated in the space $X$
and hence the topological relation between different orbits. On the
other hand the conditions $G\mdot x$ is a Baire space / a homogeneous
space / locally compact are properties of a single orbit.\\\\
The validity of $\textit{1.}\Leftrightarrow\textit{3.}$ for actions of
$\sigma$-compact groups on locally compact Hausdorff spaces has been noted
on~p.~183 of~\cite{Foll}.

\begin{prop}
 Let $G$ be a $\sigma$-compact locally compact group and $X$ a locally compact
 $G$-space. Let $x\in X$ be arbitrary. Then the orbit $G\mdot x$ is a homogeneous
 space if and only if $G\mdot x$ is locally closed.
\end{prop}
\begin{proof}
 Since $X$ is Hausdorff, the closure of $G\mdot x$ contains a non-empty open
 Hausdorff subset. Consequently the implications $\textit{1.}\Rightarrow \textit{2.}
 \Rightarrow\textit{3.}\Rightarrow\textit{4.}\Rightarrow\textit{5.}$ can be proved
 as in the proof of Proposition~\ref{Glimm_prop:characterization_of_smooth_orbits}.
 Furthermore we can apply Theorem~\ref{Glimm_thm:open_mapping_I} to prove
 $\textit{5.}\Rightarrow\textit{1.}$ because $G$ is $\sigma$-compact and
 $X$ is locally compact (and in particular Hausdorff). 
\end{proof}

\noindent The next proposition contains the essential step in the
proof of Glimm's theorem. For the convenience of the reader we
reproduce Glimm's beautiful argument which can be found
in~\cite{Glimm1}.

\begin{prop}[Local transitivity of $e$-\nb s]
\label{Glimm_prop:slice_like_neighborhood}\hfill\\
Let $X$ be a $G$-space of a locally compact group such that all
$G$-orbits in $X$ are homogeneous spaces. Assume that $X$ is
second-countable, locally quasi-compact, and almost Hausdorff.
Then
\begin{enumerate}
\item For every non-empty, open subset $V$ of $X$ and every $e$-\nb\
$N$ of $G$ there exists a non-empty, open subset $U\subset V$ with
the following property: If $g\in G$ and if $U_0\subset U$ is
non-empty, open such that $g\mdot U_0\subset U$, then
$N\mdot U_0\cap g\mdot U_0\neq\emptyset$.
\item For every non-empty, open subset $V$ of $X$ and every $e$-\nb\
$N$ of $G$, there exists a non-empty, open subset $U\subset V$
such that $N\mdot x\cap U=G\mdot x \cap U$ for all $x\in U$.
\end{enumerate}
\end{prop}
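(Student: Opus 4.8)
The plan is to prove \textit{1.} by a direct nested-neighborhood construction and then deduce \textit{2.} from it. Throughout I would first reduce to the case that $V$ is Hausdorff: since $X$ is almost Hausdorff, part~\textit{3.}\ of Lemma~\ref{Glimm_lem:almost_Hausdorff_spaces} supplies a dense open Hausdorff subset $H$, and replacing $V$ by the non-empty open set $V\cap H$ costs nothing, as the conclusions constrain only $U$ and $g$. Shrinking further, I may assume that $V$ is a relatively compact, Hausdorff, locally compact, second-countable—hence metrizable—open subset of $X$, equipped with a fixed compatible metric $d$.

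For part~\textit{1.}\ I would argue by contradiction, assuming that no non-empty open $U\subset V$ has the stated property. Then every non-empty open $U\subset V$ admits $g\in G$ and a non-empty open $U_0\subset U$ with $g\mdot U_0\subset U$ and $N\mdot U_0\cap g\mdot U_0=\emptyset$. Starting from $V_0=V$, I would recursively produce non-empty open sets $P_n,V_n$ and elements $g_n\in G$ with $P_n\subset V_{n-1}$, $g_n\mdot P_n\subset V_{n-1}$, $N\mdot P_n\cap g_n\mdot P_n=\emptyset$, and $\overline{V_n}\subset P_n$ with $\overline{V_n}$ compact of $d$-diameter $<1/n$. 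The nesting $\overline{V_n}\subset P_n\subset V_{n-1}$ forces $\bigcap_n\overline{V_n}$ to be a single point $x$ lying in every $V_n$ and every $P_n$. Since $x\in P_n$ we get $g_n\mdot x\in g_n\mdot P_n\subset V_{n-1}$, while also $x\in V_{n-1}$; as $\mathrm{diam}(V_{n-1})\to0$ this yields $g_n\mdot x\to x$, and all terms lie in the orbit. On the other hand $N\mdot x\subset N\mdot P_n$ is disjoint from $g_n\mdot P_n\ni g_n\mdot x$, so $g_n\mdot x\notin N\mdot x$ for every $n$. But $G\mdot x$ is a homogeneous space, so by Lemma~\ref{Glimm_defn:characterize_homogeneous_spaces} the set $N\mdot x$ is an $x$-\nb\ of $G\mdot x$; since $g_n\mdot x\to x$ within the orbit, this forces $g_n\mdot x\in N\mdot x$ eventually, a contradiction. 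The delicate point is the bookkeeping that makes $x$ interior to each $P_n$ and yields the single-point intersection; the nesting $\overline{V_n}\subset P_n\subset V_{n-1}$ together with metrizability is exactly what secures it.

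To obtain part~\textit{2.}\ from part~\textit{1.}, I would fix a symmetric open \nb\ $L$ of $e$ with compact closure $\overline{L}\subset N$ (local compactness of $G$) and apply part~\textit{1.}\ to $V$ and $L$, producing a non-empty open $U\subset V$ having the part~\textit{1.}\ property for $L$. The inclusion $N\mdot x\cap U\subset G\mdot x\cap U$ is trivial, so I must prove the reverse: if $x\in U$ and $y=g\mdot x\in U$, then $y\in N\mdot x$. Assuming $y\notin N\mdot x$ for contradiction, I would choose a \nb\ basis $U_0^\lambda\downarrow\{x\}$ with $\overline{U_0^\lambda}\subset U$ and $g\mdot U_0^\lambda\subset U$. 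Part~\textit{1.}\ gives $\ell_\lambda\in L$ and $a_\lambda,b_\lambda\in U_0^\lambda$ with $\ell_\lambda\mdot a_\lambda=g\mdot b_\lambda$; here $a_\lambda,b_\lambda\to x$, and by compactness of $\overline{L}$ I pass to a subnet with $\ell_\lambda\to\ell_0\in\overline{L}$, whence $g^{-1}\ell_\lambda\to g^{-1}\ell_0$. The point $b_\lambda=g^{-1}\ell_\lambda\mdot a_\lambda$ then converges both to $x$ (as $b_\lambda\to x$) and to $g^{-1}\ell_0\mdot x$ (joint continuity of the action).

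The one real obstacle is uniqueness of this limit, which is not automatic since $X$ is not Hausdorff. I would resolve it by trapping the auxiliary limit inside $U$: beyond some index the points $b_\lambda$ lie in the closed set $\overline{U_0^{\lambda_0}}\subset U$, so $g^{-1}\ell_0\mdot x\in\overline{U_0^{\lambda_0}}\subset U$ as well; now both limits lie in the Hausdorff open set $U$ along a net contained in $U$, whence $g^{-1}\ell_0\mdot x=x$, i.e.\ $g^{-1}\ell_0\in G_x$. Therefore $y=g\mdot x=\ell_0\mdot x\in\overline{L}\mdot x\subset N\mdot x$, contradicting $y\notin N\mdot x$. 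This confinement step is the crux of the deduction, and it is precisely where the compactness of $\overline{L}$ and the Hausdorffness of $U$ enter.
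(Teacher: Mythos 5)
Your part~\textit{1.}\ is correct and is essentially the paper's (Glimm's) argument in a different costume: you reduce to a metrizable situation (the dense open Hausdorff subset of a locally quasi-compact, second countable space is locally compact Hausdorff and second countable, hence metrizable) and use shrinking diameters, where the paper instead keeps the countable basis $\{W_n\}$ and imposes the condition $E_n\subset W_n$ or $E_n\cap W_n=\emptyset$ on its nested compact sets; both devices serve only to force $g_n\mdot x\to x$, after which the contradiction with $g_n\mdot x\notin N\mdot x$ and Lemma~\ref{Glimm_defn:characterize_homogeneous_spaces} is identical. The nesting $\overline{V_n}\subset P_n\subset V_{n-1}$ is handled correctly.

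Part~\textit{2.}, however, has a genuine gap at exactly the point you call the crux: the confinement step. You want a \nb\ basis $U_0^\lambda$ of $x$ with $\overline{U_0^\lambda}\subset U$, and you then treat $\overline{U_0^{\lambda_0}}$ as a set closed in $X$ that must capture the $X$-limit $g^{-1}\ell_0\mdot x$ of the net $b_\lambda$. If the closure is taken in $X$, such a basis need not exist: since $\{x\}\clos\subset\overline{U_0^\lambda}$ for every $\lambda$, it suffices that $\{x\}\clos\not\subset U$, which happens even for $x$ in the dense open Hausdorff part of an almost Hausdorff space --- in the Sierpi\'nski space $\{a,b\}$ with open sets $\emptyset,\{a\},\{a,b\}$, which is almost Hausdorff, every \nb\ of $a$ has closure equal to the whole space, so no \nb\ of $a$ has closure inside $U=\{a\}$. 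If instead the closure is taken inside the metrizable Hausdorff piece $V$, then $\overline{U_0^{\lambda_0}}$ is compact but compact subsets of a non-Hausdorff $X$ need not be closed in $X$, so a net lying in $\overline{U_0^{\lambda_0}}$ can perfectly well converge in $X$ to a point outside it; the inference $g^{-1}\ell_0\mdot x\in\overline{U_0^{\lambda_0}}$ is unjustified either way, and with it the uniqueness-of-limits argument collapses. The paper closes this hole \emph{before} invoking part~\textit{1.}: it fixes a Hausdorff open $W_2\subset V$, then (by joint continuity of the action and local compactness of $G$) an open $W_1\subset W_2$ and a compact $e$-\nb\ $L\subset N$ with $L\mdot W_1\subset W_2$, and applies part~\textit{1.}\ to produce $U\subset W_1$. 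Then the net $h_n\mdot y_n\in L\mdot W_1$, its limit candidate $h\mdot x\in L\mdot W_1$, and $g\mdot x\in U$ all lie inside the single Hausdorff set $W_2$, where limits of nets are unique, yielding $g\mdot x=h\mdot x\in L\mdot x\subset N\mdot x$. Your compactness of $\overline{L}$ is the right ingredient, but without arranging $\overline{L}\mdot U$ (or $L\mdot W_1$) to sit inside a Hausdorff open set in advance, the auxiliary limit cannot be trapped, and the closed-\nb\ substitute you propose fails in a non-Hausdorff $X$.
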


\begin{proof}\hfill
\begin{enumerate}
\item Suppose that the assertion of \textit{1.} does not hold true so 
that there is a non-empty, open subset $V$ of $X$ and an $e$-\nb\
$N$ of $G$ with the following property: For every non-empty, open
subset $U\subset V$ there exist an element $g\in G$ and a non-empty,
open subset $U_0\subset U$ such that $g\mdot U_0\subset U$ and
$N\mdot U_0\cap g\mdot U_0=\emptyset$. Since $X$ is almost Hausdorff,
we can assume that $V$ is Hausdorff. For $X$ is second countable,
there is a basis $\{W_n:n\ge 1\}$ of the topology of $X$.\\\\
By induction we can choose elements $g_n\in G$ and compact subsets
$E_n\subset V$ with non-empty interior such that the following
conditions are satisfied:
\begin{enumerate}
\item $g_n\mdot E_n\subset E_{n-1}$ and $E_n\subset E_{n-1}$,
\item $N\mdot E_n\cap g\mdot E_n=\emptyset$,
\item $E_n\subset W_n$ or $E_n\cap W_n=\emptyset$.
\end{enumerate}
We shall explain the details: Assume that $E_1,\ldots,E_{n-1}$ are given
such that the conditions \textit{(a)-(c)} hold true. If $E_{n-1}\cap W_n
\neq\emptyset$, then $W_n$ has a non-empty intersection with the
interior $\overset{\circ}{E}_{n-1}$ of $E_{n-1}$ so that there
exist an element $g_n\in G$ and a non-empty, open subset
$U_n\subset\overset{\circ}{E}_{n-1}\cap W_n$ such that
$g_n\mdot U_n\subset\overset{\circ}{E}_{n-1}\cap W_n$ and
$N\mdot U_n\cap g_n\mdot U_n=\emptyset$ by the defining property
of $V$ and $N$. Since $X$ is locally quasi-compact, we can choose
a compact subset $E_n\subset U_n$ with non-empty interior which, by
definition, satisfies \textit{(a)-(c)}. On the other hand, if
$E_{n-1}\cap W_n=\emptyset$, we can find $g_n\in G$ and
$U_n\subset\overset{\circ}{E}_{n-1}$ non-empty, open such that
$N\mdot U_n\cap g\mdot U_n=\emptyset$. Now any compact subset
$E_n\subset U_n$ with non-empty interior satisfies our
requirements.\\\\
Since $E_1$ is compact and $E_n$ is a decreasing sequence
of non-empty closed subsets of $E_1$, there exists a point
$x\in\bigcap_{n=1}^\infty E_n$ by the finite intersection property.
Now condition \textit{(a)} and \textit{(c)} imply $g_n\mdot x\to x$,
and \textit{(b)} implies $g_n\mdot x\not\in N\mdot x$. Thus
$N\mdot x$ cannot be an $x$-\nb\ of $G\mdot x$, in contradiction
to the assumption that all $G$-orbits are homogeneous spaces.
This proves assertion \textit{1.}
\item Given $N$ and $V$, we fix a non-empty, open Hausdorff subset
$W_2\subset V$, which is possible because $X$ is almost Hausdorff.
Further we choose a non-empty, open subset $W_1\subset W_2$ and
a compact $e$-\nb\ $L\subset N$ of $G$ such that 
$L\mdot W_1\subset W_2$. Here we use the fact that $G$ is
locally compact. Further it follows form part \textit{1.} that
there exist a non-empty, open subset $U\subset W_1$ with the
following property: If $g\in G$ and $U_0\subset U$ is non-empty,
open such that $g\mdot U_0\subset U$, then it follows
$L\mdot U_0\cap g\mdot U_0\neq\emptyset$.\\\\
Let $x\in U$ be arbitrary. We must prove $G\mdot x\cap U\subset
L\mdot x \cap U$. Let $g\in G$ be arbitrary such that $g\mdot x\in U$.
If $\{U_n:n\ge 1\}$ is a basis of $x$-\nb s, then $g\mdot U_n\subset U$
for large $n$, and thus $L\mdot U_n\cap g\mdot U_n\neq\emptyset$
by definition of $U$. Hence there exist $h_n\in L$ and
$x_n,y_n\in U_n$ such that $g\mdot x_n=h_n\mdot y_n$. Since $L$
is compact, we can assume that $h_n\to h$ converges in $L$.
Since $W_2$ is Hausdorff and $x_n,y_n\to x$, we see that
\[g\mdot x=\lim_{n\to\infty} g\mdot x_n=\lim_{n\to\infty}
h_n\mdot y_n=h\mdot x\]
lies in $L\mdot x\cap U$, which completes the proof.
\end{enumerate}
\end{proof}

\noindent We shall give a further explanation of condition \textit{2}:
For any open subset $V$ of $X$ and any $e$-\nb\ $N$ of $G$ there
is an open subset $U$ of $V$ such that $N\mdot x\cap U=G\mdot x\cap U$
for every $x\in U$. In Glimm's own words: $N$ acts locally as transitive
as $G$ does. One should think of $U$ as a prolate open subset which is
transversal to the orbits passing through $V$ and whose width becomes
arbitrarily small as $N$ shrinks to $\{e\}$.\\\\
Now we present a reformulation of Glimm's theorem characterizing nice
behaviour of $G$-spaces, see Theorem~1 of~\cite{Glimm1}. In contrast
to the original proof we shall avoid measure theoretical arguments involving
the Borel structure of the orbit space $G\setminus X$ and quasi-invariant
ergodic measures on $X$.

\begin{thm}[James Glimm, 1961]
\label{Glimm_thm:characterization_of_nice_actions}\hfill\\
Let $G$ be a hereditary Lindel\"of  locally compact group and $X$ a
second countable, locally quasi-compact, almost Hausdorff $G$-space.
Then there are equivalent:
\begin{enumerate}
\item $G\setminus X$ is almost Hausdorff.
\item $G\setminus X$ is a $T_0$-space.
\item Every orbit is a homogeneous space.
\end{enumerate}
\end{thm}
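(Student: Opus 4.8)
The plan is to prove the cycle $\textit{1.}\Rightarrow\textit{2.}\Rightarrow\textit{3.}\Rightarrow\textit{1.}$ The implication $\textit{1.}\Rightarrow\textit{2.}$ is immediate: an almost Hausdorff space is $T_0$ by part~\textit{1.}\ of Lemma~\ref{Glimm_lem:almost_Hausdorff_spaces}. Before the substantial work I would record the standard fact that the quotient map $q:X\to X/G$ is continuous, surjective and \emph{open} (the saturation $G\mdot V$ of an open set is open), and that consequently $q^{-1}(\overline{\{q(x)\}})=\overline{G\mdot x}$ for every $x$. Thus $q(z)$ lies in $\overline{\{q(x)\}}$ precisely when $z\in\overline{G\mdot x}$, so that $X/G$ fails to be $T_0$ exactly when there are two \emph{distinct} orbits $G\mdot x\neq G\mdot z$ having the same closure in $X$.

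For $\textit{2.}\Rightarrow\textit{3.}$ I would argue by contraposition, using Proposition~\ref{Glimm_prop:characterization_of_smooth_orbits} (or Lemma~\ref{Glimm_lem:locally_closed_implies_homogeneous_space}, noting that every closed subspace of $X$ is Baire by part~\textit{3.}\ of Lemma~\ref{Glimm_lem:Baire_spaces}) to reduce ``$G\mdot x$ is a homogeneous space'' to ``$G\mdot x$ is locally closed''. So suppose some orbit $G\mdot x$ is \emph{not} locally closed and set $Y=\overline{G\mdot x}$, a closed, hence locally quasi-compact, almost Hausdorff and (by Lemma~\ref{Glimm_lem:Baire_spaces}) Baire space in which $G\mdot x$ is dense but not open. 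The key step is a category dichotomy: writing $G=\bigcup_n C_n$ with $C_n$ compact ($G$ is $\sigma$-compact) gives $G\mdot x=\bigcup_n C_n\mdot x$ with each $C_n\mdot x$ quasi-compact; intersecting with the dense open Hausdorff part $U_H$ of $Y$, where quasi-compact sets are closed, one shows that if some $C_n\mdot x$ were non-meager then $G\mdot x$ would contain an interior point of $Y$ and hence, being a single orbit with $G$ acting by homeomorphisms, be open in $Y$ --- contrary to assumption. Therefore $G\mdot x$ is meager in $Y$. On the other hand, second countability furnishes a basis $\{B_k\}$ of $Y$, and $\{z:\overline{G\mdot z}=Y\}=\bigcap_k G\mdot B_k$ is a countable intersection of open dense sets, hence a dense $G_\delta$ in the Baire space $Y$. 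As $G\mdot x$ is meager, this comeager set must contain some $z\notin G\mdot x$; then $G\mdot x$ and $G\mdot z$ are distinct orbits with common closure $Y$, so $X/G$ is not $T_0$.

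For $\textit{3.}\Rightarrow\textit{1.}$ let $C\subset X/G$ be non-empty closed and put $F=q^{-1}(C)$, a closed $G$-invariant subspace; it is second countable, locally quasi-compact, almost Hausdorff, and all its orbits are homogeneous, so Proposition~\ref{Glimm_prop:slice_like_neighborhood} applies. Starting from a non-empty open Hausdorff $V\subset F$, I would fix a compact $e$-\nb\ $N$ and open sets $W_1\subset W_2\subset V$ with $W_2$ Hausdorff and $N\mdot W_1\subset W_2$, then invoke part~\textit{2.}\ to produce a non-empty open $U\subset W_1$ with $N\mdot y\cap U=G\mdot y\cap U$ for all $y\in U$. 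The claim is that $q(U)$ is Hausdorff: since $q|_U:U\to q(U)$ is open, it suffices to show the orbit relation is closed in $U\times U$, and if $y_\lambda=g_\lambda\mdot x_\lambda$ with $x_\lambda\to x$ and $y_\lambda\to y$ in $U$, the slice condition forces each $y_\lambda\in N\mdot x_\lambda$, so compactness of $N$ yields a subnet $g_\lambda\to h\in N$ and then $y=h\mdot x\in G\mdot x$ by uniqueness of limits in the Hausdorff set $W_2$. Hence $q(U)$ is a non-empty relatively open Hausdorff subset of $C$, and $X/G$ is almost Hausdorff.

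The main obstacle is the implication $\textit{2.}\Rightarrow\textit{3.}$, and within it the Baire-category dichotomy: proving that a dense orbit which is not open is meager in its closure. This is exactly where $\sigma$-compactness of $G$, second countability of $X$, and the interplay between quasi-compactness and the dense Hausdorff part of an almost Hausdorff space all come together. By comparison $\textit{1.}\Rightarrow\textit{2.}$ is formal, and $\textit{3.}\Rightarrow\textit{1.}$ is a fairly direct application of the already-established slice proposition.
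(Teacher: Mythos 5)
Your implications \textit{1.}$\Rightarrow$\textit{2.}\ and \textit{3.}$\Rightarrow$\textit{1.}\ are in substance the paper's own arguments (the latter phrased via closedness of the orbit relation under the open map $q|_U$ instead of separating two inseparable orbits, but the net argument with $h_\lambda\in N$ and uniqueness of limits in $W_2$ is the same). The divergence is \textit{2.}$\Rightarrow$\textit{3.}, where your Effros-style category dichotomy has a genuine gap at its central step. Non-meagerness of some $C_n\mdot x$ in $Y=\overline{G\mdot x}$ only tells you that the \emph{closure} $\overline{C_n\mdot x}$ has non-empty interior; to promote an interior point of this closure to an actual point of $G\mdot x$ you need $C_n\mdot x$ (or a suitable trace of it) to be closed, and this is precisely what fails in a non-Hausdorff $Y$. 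Your parenthetical fix --- intersect with the dense open Hausdorff part $U_H$, ``where quasi-compact sets are closed'' --- does not work: quasi-compact subsets \emph{of} $U_H$ are closed in $U_H$, but $C_n\mdot x$ need not be contained in $U_H$, and its trace $C_n\mdot x\cap U_H$ need not be quasi-compact, nor closed in $U_H$. In the line with doubled origin $0,0'$ (a locally quasi-compact, almost Hausdorff space) the set $K=([-1,1]\setminus\{0\})\cup\{0'\}$ is quasi-compact, yet its trace on the dense open Hausdorff part $\mR$ is $[-1,1]\setminus\{0\}$, which is not closed there. Running your intended net argument exposes the same obstruction: for $v\in U_H$ in the interior of $\overline{C_n\mdot x}$ you get a net $c_\lambda\mdot x\to v$ with $c_\lambda\to c\in C_n$, hence also $c_\lambda\mdot x\to c\mdot x$; but unless $c\mdot x$ happens to lie in $U_H$, Hausdorffness of $U_H$ yields no identification $v=c\mdot x$, since a net inside a Hausdorff open subset can converge simultaneously to a point inside it and a point outside it (again the doubled line). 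Almost Hausdorffness alone does not exclude this, so the dichotomy ``dense orbit non-meager in $Y$ $\Rightarrow$ open in $Y$'' is unproved as it stands.

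The remainder of your \textit{2.}$\Rightarrow$\textit{3.}\ is sound: $\{z:\overline{G\mdot z}=Y\}=\bigcap_k G\mdot B_k$ is indeed a dense $G_\delta$, since each $G\mdot B_k$ is open and contains the dense orbit $G\mdot x$, and $Y$ is Baire by part~\textit{3.}\ of Lemma~\ref{Glimm_lem:Baire_spaces}; granted meagerness of $G\mdot x$, the non-$T_0$ conclusion follows correctly. But note that the paper obtains \textit{2.}$\Rightarrow$\textit{3.}\ much more cheaply and sidesteps the dichotomy entirely: since $\pi:X\onto X/G$ is open and $X$ is first countable, $X/G$ is first countable, and the $T_0$ axiom gives $\{G\mdot x\}=\bigcap_{n=1}^\infty\dot{U}_n\cap\{G\mdot x\}\clos$ for a countable neighborhood basis $\dot{U}_n$; pulling back along $\pi$ shows $G\mdot x$ is a $G_\delta$-subset of $\overline{G\mdot x}$, and Proposition~\ref{Glimm_prop:characterization_of_smooth_orbits} (via Baire and the open mapping theorem, Theorem~\ref{Glimm_thm:open_mapping}) yields homogeneity. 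If you wish to keep the contrapositive route --- which is essentially Effros' dichotomy, and does work in the Polish/Hausdorff setting --- you would have to re-prove its compactness-plus-closedness step in the almost Hausdorff setting, i.e.\ exactly the delicate point the paper's arrangement is designed to confine to Theorem~\ref{Glimm_thm:open_mapping} and Lemma~\ref{Glimm_lem:Baire_spaces}.
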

\begin{proof}
The implication $\textit{1.}\Rightarrow\textit{2}$.\ follows from
part~\textit{1.}\ of Lemma~\ref{Glimm_lem:almost_Hausdorff_spaces}.
For $\textit{2.}\Rightarrow\textit{3.}$\ we assume that $G\setminus X$ is a
$T_0$-space. Let $\pi:X\onto G\setminus X$ denote the projection onto the
orbit space which is an open map. Since $X$ and hence $G\setminus X$ are
first countable, the point $G\mdot x$ of $G\setminus X$ has a countable
neighborhood basis $\dot{U}_n$. For $G\setminus X$ is $T_0$, we obtain
$\{G\mdot x\}=\bigcap_{n=1}^\infty\dot{U}_n\cap
\{G\mdot x\}\clos$. This implies that
\[G\mdot x=\bigcap_{n=1}^\infty\;\pi^{-1}(\dot{U}_n)
\cap\overline{G\mdot x}\]
is a $G_\delta$-subset of its closure. Furthermore $X$ is locally
quasi-compact almost Hausdorff and hence totally Baire by
Proposition~\ref{Glimm_lem:almost_Hausdorff_locally_quasi_compact_implies_Baire}.
Since $G$ is hereditary Lindel\"of,
Proposition~\ref{Glimm_prop:characterization_of_smooth_orbits} implies
that $G\mdot x$ is a homogeneous space.\\\\
Next we verify \textit{3.}$\Rightarrow$\textit{1.} We must
prove that any non-empty closed subset $\dot{A}$ of the orbit
space $G\setminus X$ contains a non-empty relatively open Hausdorff
subset $\dot{W}$. Since $A=\pi^{-1}(\dot{A})$ is $G$-invariant
and closed, it is clear that $A$ is a second countable, locally
quasi-compact, almost Hausdorff $G$-space satisfying \textit{3}.
First we choose a non-empty relatively open Hausdorff subset
$W_2$ of $A$. Fix a compact $e$-\nb\ $N$ of $G$ and a
non-empty relatively open subset $W_1$ of $W_2$ such that
$N\mdot W_1\subset W_2$. By applying Proposition~
\ref{Glimm_prop:slice_like_neighborhood} to the $G$-space
$A$ we find a non-empty relatively open subset $U$ of $W_1$
such that $N\mdot x\cap U=G\mdot x\cap U$ for all $x\in U$.
Finally we define $W=G\mdot U$ which is relatively open in $A$.
Suppose that $\dot{W}=\pi(W)$ is not Hausdorff so that there
exist points $x,y\in W$ such that $G\mdot x\neq G\mdot y$
cannot be separated by disjoint $G$-invariant open \nb s.
For $X$ is first countable, we find open sets
$X_n, Y_n\subset U$ such that $\{x\}=\bigcap_{n=1}^\infty X_n$
and $\{y\}=\bigcap_{n=1}^\infty Y_n$. By assumption we have
$G\mdot X_n\cap G\mdot Y_n\neq\emptyset$ so that there exist
elements $x_n\in X_n$, $y_n\in Y_n$, $g_n\in G$ such that
$x_n=g_n\mdot y_n$. Using $N\mdot y_n\cap U=G\mdot y_n\cap U$
we can even find $h_n\in N$ such that $x_n=h_n\mdot y_n$.
Since $N$ is compact, we can assume that $h_n\to h$
converges in $N$. This implies
\[x=\lim_{n\to\infty}x_n=\lim_{n\to\infty}h_n\mdot y_n
=h\mdot y\]
and hence $G\mdot x=G\mdot y$, a contradiction. Thus
$\dot{W}=\pi(W)$ must be Hausdorff. This proves $G\setminus X$ to be
almost Hausdorff. The proof of Glimm's theorem is complete.
\end{proof}

\noindent Note that for $\textit{1.}\Rightarrow\textit{2.}\Rightarrow\textit{3.}$
we do not need that $X$ is second countable.\\\\
In retrospect, Glimm's merit is the ingenious proof of
Proposition~\ref{Glimm_prop:slice_like_neighborhood} which implies that
$G\setminus X$ is almost Hausdorff provided that all orbits are homogeneous spaces.\\\\
Next we shall be concerned with the question whether a given $G$-space $X$
is quasi-regular or regular following the ideas of P.\ Green in section~5
of~\cite{Green1}. To this end we introduce the notion of $G$-irreducible subsets
and generic orbits.

\begin{defn}
 Let $G$ be a topological group and $X$ a $G$-space. A $G$-invariant closed
 subset $W$ of $X$ is called $G$-irreducible if it cannot be written as a
 union of two proper $G$-invariant closed subsets, i.e., if $W=A\cup B$ with
 $A,B\subset X$ $G$-invariant and closed implies $A=X$ or $B=X$. The subset~$W$
 has a generic orbit if there exists  a (not necessarily unique) $G$-orbit
 $G\mdot x$ such that $W=\overline{G\mdot x}$.
\end{defn}

\noindent By definition $W$ is $G$-irreducible if and only if $G\setminus W$ is
an irreducible subset of $G\setminus X$ and $W$ has a generic orbit if and only
if $G\setminus W$ has a generic point. These notions are borrowed from algebraic
geometry.\\\\
The following definition is from~\cite{Green1} and can also be found on p.~186
of~\cite{Will} where the role of quasi-orbits is emphasized.

\begin{defn}\label{Glimm_defn:regular_G_space}
 A $G$-space $X$ is quasi-regular if every $G$-irreducible subset has
 at least one generic orbit. A $G$-space $X$ is called regular if the 
 following three conditions are satisfied:
 \begin{itemize}
  \item $X$ is quasi-regular.
  \item Every $G$-orbit of $X$ is locally closed.
  \item Every $G$-orbit is a homogeneous space.
 \end{itemize}
\end{defn}

\noindent We emphasize that the first two conditions in the definition 
of a regular $G$-space imply that every $G$-irreducible subset contains 
a unique generic orbit. The last two conditions of 
Definition~\ref{Glimm_defn:regular_G_space} imply that all five assertions 
stated in Proposition~\ref{Glimm_prop:characterization_of_smooth_orbits} hold 
true.\\\\
If $G$ is hereditary Lindel\"of and $X$ is totally Baire, then it suffices to assume
that every $G$-orbit contains a non-empty open Hausdorff subset instead of the
stronger condition that every orbit is a homogeneous space. If, in addition, $X$
is almost Hausdorff, then for $X$ to be regular it suffices to assume that $X$
is quasi-regular and every $G$-orbit is locally closed.

\begin{prop}\label{Glimm_prop:almost_Hausdorff_implies_quasi_regular}
Let $X$ be a $G$-space. If the orbit space $G\setminus X$ is almost Hausdorff,
then $X$ is quasi-regular and every orbit is locally closed. 
\end{prop}
\begin{proof}
Let $W$ be a $G$-irreducible subset of~$X$. Since $G\setminus W$ is almost
Hausdorff, there exists a non-empty $G$-invariant dense open subset~$U$ of~$W$
such that $G\setminus U$ is Hausdorff. Suppose that $U$ contains two distinct
orbits $G\mdot x$ and $G\mdot y$. Then there exist two disjoint $G$-invariant
open subsets $U_1$ and $U_2$ of $U$ such that $x\in U_1$ and $y\in U_2$. Now it
follows $W=(W\setminus U_1)\cup(W\setminus U_2)$ in contradiction to
the $G$-irreducibility of $W$. Thus $U=G\mdot x$ for some $x\in X$.
Clearly $G\mdot x$ is a generic orbit for $W$ because $U$ is dense.\\
Let $x\in X$ be arbitrary. Then $W:=\overline{G\mdot x}$ is a $G$-irreducible 
subset of $X$. The preceding considerations show that $U=G\mdot x$ is open 
in~$W=\overline{G\mdot x}$. Thus $G\mdot x$ is locally closed. 
Alternatively, this can be seen as a consequence of part~\textit{1.}\ of 
Lemma~\ref{Glimm_lem:almost_Hausdorff_spaces}.
\end{proof}

\begin{prop}\label{Glimm_prop:almost_Hausdorff_implies_regular}
Let $G$ be a hereditary Lindel\"of locally compact group and $X$ a locally
quasi-compact almost Hausdorff $G$-space such that $G\setminus X$ is almost
Hausdorff. Then $X$ is regular.
\end{prop}
\begin{proof}
By Proposition~\ref{Glimm_prop:almost_Hausdorff_implies_quasi_regular} we know
that $X$ is quasi-regular and that every orbit is locally closed. Let $x\in X$
be arbitrary. Since $G$ is hereditary Lindel\"of and $X$ is totally Baire by
Lemma~\ref{Glimm_lem:almost_Hausdorff_locally_quasi_compact_implies_Baire},
it follows from Proposition~\ref{Glimm_prop:characterization_of_smooth_orbits}
that $G\mdot x$ is a homogeneous space.
\end{proof}

\subsection*{Actions of compact groups}

In this section we are interested in $G$-spaces of compact groups. We
begin with a technical lemma.

\begin{lem}\label{Glimm_lem:Ginvariant_Hausdorff_subsets}
Let $G$ be a compact group and $X$ a $G$-space which contains a dense
open Hausdorff subset $U$. Then there exists a non-empty $G$-invariant
open Hausdorff subset~$W$ of $X$. 
\end{lem}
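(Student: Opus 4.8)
The plan is to produce $W$ as a $G$-invariant open subset sitting inside the Hausdorff set $U$, and to fall back on the non-Hausdorff locus of $X$ when no whole orbit fits inside $U$.

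First I would consider $A=X\setminus U$, a closed nowhere dense subset, and the set
\[V=\{x\in X:G\mdot x\subseteq U\}=X\setminus G\mdot A.\]
$V$ is $G$-invariant, and it is Hausdorff because $V\subseteq U$. It is also open: if $G\mdot x\subseteq U$ then, since $U$ is open and $G$ is compact, a standard tube argument (cover $G$ by finitely many open $N_i$ with $N_i\mdot P_i\subseteq U$ and intersect the $P_i$) yields an open \nb\ $P$ of $x$ with $G\mdot P\subseteq U$, so $P\subseteq V$; hence $G\mdot A$ is closed. If $V\neq\emptyset$ we are done, so the entire difficulty is the case $G\mdot A=X$, i.e.\ every orbit meets the thin set $A$. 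This genuinely occurs, for instance when a circle group sweeps a ray across the plane, so the tube set alone does not suffice.

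To cover that case I would bring in the non-Hausdorff locus
\[S=\{x\in X:\text{there is }y\neq x\text{ which cannot be separated from }x\}.\]
Because $G$ acts by homeomorphisms, the relation ``cannot be separated'' is $G$-invariant, so $S$, $\overline{S}$ and $X\setminus\overline{S}$ are all $G$-invariant. The set $X\setminus\overline{S}$ is open, and it is Hausdorff: any two distinct points of it lie outside $S$, hence can be separated in $X$, and intersecting the separating opens with $X\setminus\overline{S}$ separates them there. Thus $X\setminus\overline{S}$ is already a $G$-invariant open Hausdorff set, and it suffices to show that it is nonempty whenever $V$ is empty. In other words, the lemma reduces to the claim that $G\mdot A$ and $\overline{S}$ cannot both equal $X$.

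The hard part is exactly this last claim, and it is where compactness of $G$ must be used. I expect the mechanism to be a transport argument: if $x\in U$ cannot be separated from some $y$, then $y\in A$ (as $U$ is Hausdorff), and for every $g$ with $g\mdot x\in U$ one must have $g\mdot y\in A$ — otherwise $g\mdot x\neq g\mdot y$ would be two distinct limits, in the Hausdorff set $U$, of the image under $g$ of a single net converging to both $x$ and $y$, which is absurd. Using that the orbits $G\mdot x$ and $G\mdot y$ are compact and that $A$ is nowhere dense, one should be able to confine the points of $U$ that admit such a partner to a nowhere dense subset of $U$, so that $S$ is nowhere dense as soon as $U$ is dense, whence $\overline{S}\neq X$. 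Making this density estimate precise — controlling, via the compact orbit map, where the non-separable partners of points of $U$ can lie — is the main obstacle; once it is in hand, either $V$ or $X\setminus\overline{S}$ furnishes the desired $W$.
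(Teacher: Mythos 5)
Your first half (the tube set $V=X\setminus G\mdot A$) is correct, but it only disposes of the easy case, and at the decisive point the proposal stops being a proof: the claim that $S$ can be confined to a nowhere dense set is explicitly left as a hope (``one should be able to\dots'', ``the main obstacle''). Worse, the reduction itself is to a statement strictly stronger than the lemma, and that statement is false. The lemma only requires $W$ to be Hausdorff in its \emph{relative} topology; since $W$ is open, this means any two distinct points \emph{of $W$} can be separated in $X$, but it does not forbid a point of $W$ from being inseparable from a point of $X\setminus W$. So a valid $W$ may meet $S$, and demanding $X\setminus\overline{S}\neq\emptyset$ overshoots. Concretely: let $G=SO(2)$ act by rotations on $X$ = the plane with each circle $C_r$, $r\in D$, doubled, where $D\subset(0,\infty)$ is countable and dense (two copies $C_r$, $C_r'$ glued along the complement, fiberwise as for the line with two origins; rotations preserve each copy, so the action is continuous). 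Take $U$ = the set of unprimed points minus a closed ray $R$ from the origin. Then $U$ is dense, open, and Hausdorff; every orbit meets $X\setminus U$ (unprimed circles meet $R$, primed circles miss $U$ entirely), so your $V$ is empty; and $S$ contains all doubled circles, so $\overline{S}=X$ and your fallback set $X\setminus\overline{S}$ is empty as well. Note also that $S\cap U$ is dense in $U$ here, which refutes the hoped-for estimate that $S$ is nowhere dense whenever $U$ is dense. Yet the lemma holds in this example: the set of all unprimed points is a non-empty $G$-invariant open Hausdorff subset -- one that intersects $S$ heavily.

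The idea you are missing, and which the paper uses, makes no attempt either to fit a whole orbit inside $U$ or to avoid the non-Hausdorff locus. One covers a single quasi-compact orbit $G\mdot x_0$ ($x_0\in U$) by finitely many translates, $G\mdot x_0\subset W_0=\bigcup_{k=1}^n g_k\mdot U$, uses compactness of $G$ to find a tube, i.e.\ an open $x_0$-\nb\ $V_0$ with $G\mdot V_0\subset W_0$, and then shrinks: $V=V_0\cap\bigcap_{k=1}^n g_k\mdot U$ is non-empty because a \emph{finite} intersection of dense open sets is dense. Setting $W=G\mdot V$, any $y_1\neq y_2$ in $W$ can be written as $y_1=g\mdot x_1$ with $x_1\in V$ and $g^{-1}\mdot y_2=x_2\in g_k\mdot U$ for some $k$ (since $W\subset W_0$), while $x_1\in V\subset g_k\mdot U$ for that same $k$; so both points are translated into the single open Hausdorff set $g_k\mdot U$, separated there, and the separating sets are translated back by $g$. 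Hausdorffness of $W$ is thus verified pairwise by translation into one translate of $U$ -- which is exactly why $W$ may meet $\overline{S}$, and why no case distinction on whether $V=\emptyset$ is needed at all.
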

\begin{proof}
Let $x_0$ be in $U$. Since $G\mdot x_0$ is quasi-compact, there
are $g_1,\ldots g_n\in G$ such that $W_0=\bigcup_{k=1}^n g_k\mdot U$
contains $G\mdot x_0$. Further there exists an open $x_0$-\nb\ $V_0$ such
that $G\mdot V_0\subset W_0$ because $W_0$ is open and $G$ is compact.
As $\bigcap_{k=1}^n g_k\mdot U$ is dense in $X$, it follows that
$V=V_0\cap\left(\bigcap_{k=1}^n g_k\mdot U\right)$ is non-empty and
open. Define $W:=G\mdot V$. Clearly $W$ is non-empty, $G$-invariant
and open. We prove that $W$ is Hausdorff: Let $y_1\neq y_2$ be
in $W$. Then $y_1=g\mdot x_1$ for some $x_1\in V$ and $g\in G$. Put
$x_2:=g^{-1}\mdot y_2$. Then $x_2\in g_k\mdot U$ for some $1\le k\le n$.
Since $g_k\mdot U$ is Hausdorff and $V\subset g_k\mdot U$, there are
disjoint open \nb s $X_1$ and $X_2$ of $x_1$ and $x_2$ respectively.
Finally $g\mdot X_1$ and $g\mdot X_2$ are disjoint open \nb s of
$y_1$ and $y_2$ which proves $W$ to be Hausdorff.
\end{proof}

\noindent Now we are able to prove the main result of this section.

\begin{thm}\label{Glimm_thm:compact_groups_act_almost_properly}
Let $G$ be a compact group and $X$ an almost Hausdorff $G$-space. Then
the orbit space $G\setminus X$ is almost Hausdorff, too. Moreover, $X$ is
a regular $G$-space. 
\end{thm}
\begin{proof}
First we prove that $G\setminus X$ is almost Hausdorff: Let $A$ be a
$G$-invariant closed subset of $X$. Clearly $A$ contains a
relatively open dense Hausdorff subset $U$. By the preceding
lemma it follows that $A$ contains a non-empty, $G$-invariant,
relatively open Hausdorff subset $W$. We claim that $G\setminus W$
is Hausdorff: If not, then there are points $x$ and $y$ of $W$
such that $G\mdot x\neq G\mdot y$ cannot be separated by
disjoint $G$-invariant \nb s. We choose nets $X_\lambda$ and
$Y_\lambda$ of \nb s of $x$ and $y$ respectively such that
$\{x\}=\bigcap_{\lambda\in\Lambda} X_\lambda$ and
$\{y\}=\bigcap_{\lambda\in\Lambda} Y_\lambda$. For every
$\lambda\in\Lambda$ we find $x_\lambda\in X_\lambda$,
$y_\lambda\in Y_\lambda$, and $g_\lambda\in G$ such that
$x_\lambda=g_\lambda\mdot y_\lambda$ because
$G\mdot X_\lambda\cap G\mdot Y_\lambda\neq\emptyset$.
Since $G$ is compact, there is a subnet $g_{\lambda_\nu}$
which converges to $g\in G$. Using that limits are unique in
the Hausdorff subset $W$ we conclude 
\[x=\lim_{\nu}x_{\lambda_\nu}=\lim_\nu g_{\lambda_\nu}\mdot
y_{\lambda_\nu}=g\mdot y\]
and hence $G\mdot x=G\mdot y$, a contradiction. Thus $G\setminus X$
is almost Hausdorff. In particular, $X$ is quasi-regular by
Proposition~\ref{Glimm_prop:almost_Hausdorff_implies_quasi_regular}.\\\\
Finally we verify that all orbits are homogeneous spaces. Let
$x\in X$ be arbitrary. Since $G\setminus X$ is almost Hausdorff, there
exists a non-empty, $G$-invariant, relatively open Hausdorff
subset $W$ of $\overline{G\mdot x}$ such that $G\setminus W$ is
Hausdorff. This implies that $G\mdot x=W$ is Hausdorff and locally
closed. Moreover it follows from Theorem~\ref{Glimm_thm:open_mapping_I}
that $G\mdot x$ is a homogeneous space. Altogether, this shows that
$X$ is a regular $G$-space.
\end{proof}

\begin{defn}\label{Glimm_defn:almost_proper_G_space}
 A $G$-space $X$ is called almost proper if every closed $G$-invariant
 subset of $X$ contains a non-empty open $G$-invariant Hausdorff subset $W$
 such that $G$ acts properly on $W$ in the sense that the
 map $G\times W\to W\times W$, $(g,x)\mapsto(g\mdot x,x)$, is proper.
\end{defn}

\noindent By Zorn's Lemma $W$ can be chosen to be dense in $X$.\\\\
If $X$ is an almost proper $G$-space, then, in particular, $X$
is almost Hausdorff.

\begin{lem}\label{Glimm_lem:almost_proper_G_spaces_are_regular}
 Let $X$ be an almost proper $G$-space. Then $X$ is regular.
\end{lem}
\begin{proof}
 Let $x\in X$ be arbitrary. By definition there exists a non-empty open
 $G$-invariant Hausdorff subset $W$ of $\overline{G\mdot x}$ such that
 $G$ acts properly on $W$. The basic results about proper actions imply
 that the orbit space $G\setminus W$ is Hausdorff. As in the proof of
 Theorem~\ref{Glimm_thm:compact_groups_act_almost_properly} it follows that 
 $G\mdot x=W$ is Hausdorff and locally closed. Moreover, we find that 
 $G\mdot x$ is a homogeneous space, either as a basic result about proper 
 actions or as a consequence of Theorem~\ref{Glimm_thm:open_mapping_I}.
\end{proof}

\noindent Lemma~\ref{Glimm_lem:Ginvariant_Hausdorff_subsets} implies
that that almost Hausdorff $G$-spaces of compact groups are almost proper.

\subsection*{Applications in harmonic analysis}

Important examples of topological spaces which are not Hausdorff are
primitive ideal spaces of $C^\ast$-algebras.\\\\
Let $\mcA$ be a $C^\ast$-algebra. Let $\Prim\mcA$ denote the space 
of all primitive ideals of $\mcA$ endowed with the hull-kernel topology.
Let $\widehat{\mcA}$ denote the set of all equivalence classes of topologically
irreducible $\ast$-representations of $\mcA$ endowed with the initial topology
with respect to the canonical map $\widehat{\mcA}\to\Prim\mcA$. Obviously
$\Prim\mcA$ is a $T_0$-space. It is well-known that $\widehat{\mcA}$ and 
hence $\Prim\mcA$ are locally quasi-compact. The spectrum $\widehat{\mcA}$
of $\mcA$ is in general not $T_0$. Furthermore the locally closed subsets
of $\Prim\mcA$ can be characterized easily: If $A\subset\Prim\mcA$ is
locally closed, then there exist closed ideals $I$ and $J$ of $\mcA$ such
that $A=h(J)\cap (\Prim\mcA\setminus h(I))$. Thus $A$ is homeomorphic to
the primitive ideal space of the subquotient $(I+J)/J$ of~$\mcA$.\\\\
The following assertion is proved in Corollaire~3.4.13 of~\cite{Dix3}.

\begin{lem}
 The space $\Prim\mcA$ is totally Baire.
\end{lem}
\begin{proof}
Let $A$ be a closed subset of $\Prim\mcA$. Let $I$ be an ideal of $\mcA$
such that $A=h(I)$. Define $\mcA_0:=\mcA/I$. We prove that $\Prim\mcA_0$ is
Baire. Let $V_n$ be a sequence of dense open subset of $\Prim\mcA_0$. 
Let $P(\mcA_0)$ denote set of all pure states of $\mcA_0$ endowed with
the relative topology of the $\sigma(\mcA_0',\mcA_0)$-topology of $\mcA_0'$.
Let $U_n$ be the preimage of $V_n$ in~$P(\mcA_0)$. Since the canonical maps
$P(\mcA_0)\to\widehat{\mcA}_0\to\Prim\mcA_0$ are continuous and open, it
follows that $U_n$ is dense and open. By Choquet's Theorem we know that
$P(\mcA_0)$ is a Baire space, see Appendix~(B14) of~\cite{Dix3}. This
implies that $\cap_{n=1}^\infty U_n$ is dense in $P(\mcA_0)$. Hence
$\cap_{n=1}^\infty V_n$ is dense in $\Prim\mcA_0$. This proves that
$A=h(I)\cong\Prim\mcA_0$ is a Baire space.
\end{proof}

\noindent We recall the fundamental results about primitive ideal spaces.
Let $\mcA$ be a $C^\ast$-algebra. We say that $\mcA$ is liminal if, for every 
irreducible $\ast$-representation $\pi$ of~$\mcA$ in a Hilbert space $\frakH$,
the image $\pi(\mcA)$ is contained in the $\Cst$-algebra $K(\frakH)$ of all 
compact operators. Furthermore we say that $\mcA$ is postliminal if every
non-zero quotient of $\mcA$ contains a non-zero liminal ideal. It is known 
that if $\mcA$ is postliminal, then $\widehat{\mcA}\cong\Prim\mcA$ is almost 
Hausdorff, see e.g.\ Theoreme 4.4.5 of~\cite{Dix3}. If $\mcA$ is liminal, 
then $\widehat{\mcA}\cong\Prim\mcA$ is $T_1$, compare Corollaire~4.1.10
of~\cite{Dix3}. Furthermore, if $\mcA$ admits a continuous trace, then 
$\mcA$ is liminal and $\widehat{\mcA}\cong\Prim\mcA$ is Hausdorff, see 
Proposition~4.5.3 of~\cite{Dix3}.\\\\
Glimm's intention was to study group actions on primitive ideal spaces
and  to find sufficient conditions for the orbit space $\Prim\mcA\,/\,G$ 
to be almost Hausdorff.\\\\
Let $(G,\mcA)$ be a covariance system, i.e., $\mcA$ is a $C^\ast$-algebra
and $G$ a locally compact group acting strongly continuously on $\mcA$ as a
group of (isometric) automorphisms. It is known that $\Prim\mcA$ becomes a 
$G$-space in a natural way. The $\Cst$-covariance algebra $\Cst(G,\mcA)$ 
associated to $(G,\mcA)$ is defined as follows: Let $\mcC_0(G,\mcA)$ denote
the vector space of all $\mcA$-valued continuous functions on $G$ with
compact support. We endow $\mcC_0(G,\mcA)$ with the multiplication
\[(f\ast g)(x)=\int_G f(xy)^{y^{-1}}g(y^{-1})\;dy\]
and the involution
\[f^\ast(x)=\Delta_G(x^{-1})\;(f(x^{-1})^\ast)^{x}\;.\]
Let $L^1(G,\mcA)$ denote the Banach $\ast$-algebra obtained as the 
completion of $\mcC_0(G,\mcA)$ with respect to the norm
\[|\,f\,|_1=\int_G|f(x)|\;dx.\]
Let $|\,f\,|_\ast=\sup\left\{\;|\,\pi(f)\,|\;:\;\pi\text{ is a bounded}
\ast\text{-representation of }L^1(G,\mcA)\;\right\}$ denote the
$C^\ast$-semi-norm of~$L^1(G,\mcA)$ and $R:=\{f\in L^1(G,\mcA):|\,f\,|_\ast=0\}$
the $\ast$-radical. Then $\Cst(G,\mcA)$ is defined as the completion
of $L^1(G,\mcA)/R$ with respect to the norm induced by $|\;\;|_\ast$.\\\\
In the following we recall the properties of the extension and restriction 
process for ideals of covariance algebras. We refer to~\cite{Green1} for 
a more extensive treatment including the induction of ideals using
imprimitivity bimodules and the concept of strong Morita equivalence.\\\\
Every non-degenerate $\ast$-representation $\pi$ of $C^\ast(G,\mcA)$ has 
the form
\[\pi(f)\xi=\int_G\pi_1(x)\pi_2(f(x))\xi\,dx\]
for a suitable covariance pair $(\pi_1,\pi_2)$ of $(G,\mcA)$, i.e., a
strongly continuous unitary representation $\pi_1$ of $G$ and a non-dengenerate
$\ast$-representation $\pi_2$ of $\mcA$ on the same Hilbert space $\frakH$
satisfying $\pi_2(a^x)=\pi_1(x)^\ast\pi_2(a)\pi(x)$. Using this fact one can 
prove that $\Cst(G,\mcA)$ becomes an $\mcA$-bimodule by means of the actions
\[(a\ast f)(x)=a^xf(x)\quad\text{and}\quad (f\ast a)(x)=f(x)a\;.\]
In particular, it holds $(ab)\ast f=a\ast(b\ast f)$,
$a\ast(f\ast g)=(a\ast f)\ast g$, $|a\ast f|_\ast\le |a|\;|f|_\ast$,
$(a\ast f)^\ast=f^\ast\ast a^\ast$, $(f\ast a)^\ast=a^\ast\ast f^\ast$,
$(a\ast f)\ast b=a\ast(f\ast b)$ and $(f\ast a)\ast g=f\ast(a\ast g)$
for all $f,g\in\Cst(G,\mcA)$ and $a,b\in\mcA$.\\\\
Furthermore we observe that
\[f^z(x)=\Delta_G(z^{-1})\,f(x^{z^{-1}})^z\]
defines a continuous right action of $G$ on $\Cst(G,\mcA)$. Note that $G$
acts a group of isometric isomorphisms. In particular, it holds
$(f\ast g)^z=f^z\ast g^z$ and $(f^\ast)^z=(f^z)^\ast$ for all 
$f,g\in\Cst(G,\mcA)$ and $z\in G$. Moreover, $(a\ast f)^z=a^z\ast f^z$
for $a\in\mcA$.\\\\
Let $I$ be a two-sided closed ideal of $\Cst(G,\mcA)$. In the sequel all 
ideals are assumed to be two-sided and closed. We define the restriction 
of $I$ to $\mcA$ by
\[\res(I):=\{a\in\mcA:a\ast\Cst(G,\mcA)\subset I\}\,.\]
Clearly $\res(I)$ is a closed and two-sided ideal of $\mcA$.
\begin{lem}
 Let $I$ be an ideal of $\Cst(G,\mcA)$. Then the restriction $\res(I)$
 is a $G$-invariant ideal of $\mcA$.
\end{lem}
\begin{proof}
First we prove that $I$ is $G$-invariant. Let $f\in I$. Let $(g_\lambda)$ 
be an approximate identity of $\Cst(G,\mcA)$. Consider the operators
$\lambda(z)f\,(x)=f(z^{-1}x)$ and $\rho(z)f\,(x)=\Delta_G(z)\,f(xz)^{z^{-1}}$.
Since $\rho(z^{-1})$ is a bounded right multiplier, it follows
\[\rho(z^{-1})f=\lim_\lambda\rho(z^{-1})(f\ast g_\lambda)
 =\lim_\lambda f\ast(\rho(z^{-1})g_\lambda)\in I\,.\]
Using that $\lambda(z^{-1})$ is a bounded left multiplier we obtain
$f^z=\lambda(z^{-1})\rho(z^{-1})f\in I$.\\\\
Now let $a\in\mcA$ be arbitrary. Then it follows
$a^z\ast\Cst(G,\mcA)=\left(a\ast\Cst(G,\mcA)\right)^z\subset I^z=I$ and 
hence $a^z\in J$. This proves $J$ to be $G$-invariant.
\end{proof}

\noindent Let $J$ be an ideal of $\mcA$. The extension of $J$ from $\mcA$ 
up to $\Cst(G,\mcA)$ is defined as the closure of the linear span of the 
set $\{f\ast a\ast g:a\in J\text{ and }f,g\in\Cst(G,\mcA)\}$, i.e.,
\[\ext(J):=\langle\,\Cst(G,\mcA)\ast J\ast\Cst(G,\mcA)\,\rangle\clos\;.\]
Clearly $J\subset\res(\ext(J))$ for all ideals $J$ of $\mcA$, and 
$\ext(\res(I))\subset I$ for all ideals $I$ of~$\Cst(G,\mcA)$.
By definition $\ext(J)$ is the smallest ideal $I$ of $\Cst(G,\mcA)$ which
satisfies $J\subset\res(I)$. 

\begin{lem}\label{Glimm_lem:induced_ideal}
Let $J$ be a $G$-invariant ideal of $\mcA$.
\begin{enumerate}
 \item The ideal $\ext(J)$ coincides with the closure of $\mcC_0(G,J)$
 in~$\Cst(G,\mcA)$.
 \item The set $\Cst(G,\mcA)\ast J$ is contained in the closure 
 of $J\ast\Cst(G,\mcA)$, and $J\ast\Cst(G,\mcA)$ in the closure
 of $\Cst(G,\mcA)\ast J$.
\end{enumerate}
\end{lem}
\begin{proof}
Since $\mcC_0(G,\mcA)\ast J\ast\mcC_0(G,\mcA)\subset\mcC_0(G,J)$, we see 
that $\ext(J)\subset\mcC_0(G,J)\clos$. Let $\pi\widehat{=}(\pi_1,\pi_2)$ 
be an arbitrary non-degenerate $\ast$-representation such that 
$\ext(J)\subset\ker\pi$. Then we obtain 
$J\subset\res(\ext(J))\subset\ker\pi_2$. Using 
$\pi(f)\xi=\int_G\pi_1(x)\pi_2(f(x))\xi\,dx$ we conclude
$\mcC_0(G,J)\subset\ker\pi$. Since $\ext(J)$ is an ideal 
of $\Cst(G,\mcA)$ and $\pi$ is arbitrary with $\ext(J)\subset\ker\pi$, 
it follows $\mcC_0(G,J)\subset\ext(J)$. This proves the first assertion.\\\\
Now let $u_\lambda$ be an approximating identity of $J$, i.e., a net
in $J$ such that $u_\lambda^\ast=u_\lambda$, $|u_\lambda|\le 1$,
and $|\;b-u_\lambda b\;|\to 0$ for every $b\in J$. Let
$f\in\mcC_0(G,\mcA)$ and $a\in J$ be arbitrary. Since $J$ is
$G$-invariant, it follows 
\[|\;f(x)a-u_\lambda^x\,f(x)a\;|=|\;(f(x)a)^{x^{-1}}-u_\lambda\,
(f(x)a)^{x^{-1}}\;|\to 0\]
for every $x\in G$. Thus
\[|\,f\ast a-u_\lambda\ast(f\ast a)\,|_\ast\le
|\,f\ast a-u_\lambda\ast(f\ast a)\,|_1=
\int_G|\,f(x)a-u_\lambda^xf(x)a\,|\,dx\to 0\,.\]
Since $u_\lambda\ast(f\ast a)$ is in $J\ast\Cst(G,\mcA)$, this proves
the first inclusion. The second one can be verified similarly.
\end{proof}

\noindent Moreover, it holds $\Cst(G,\mcA)/\ext(J)\cong\Cst(G,\mcA/J)$.\\\\
An ideal $I$ of $\Cst(G,\mcA)$ is said to be prime if
$I_1\ast I_2\subset I$ ideals $I_1$ and $I_2$ of $\Cst(G,\mcA)$ 
implies $I_1\subset I$ or $I_2\subset I$.

\begin{lem}\label{Glimm_lem:factor_representations}
If $\pi$ is a factor representation of a $\Cst$-algebra $\mcB$,
then $\ker\pi$ is prime.
\end{lem}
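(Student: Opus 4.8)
The plan is to exploit the standard correspondence between closed two-sided ideals of $\mcB$ and central projections of the von Neumann algebra $\pi(\mcB)''$ generated by the factor representation. Recall that $\pi$ being a factor representation means precisely that the centre $\pi(\mcB)''\cap\pi(\mcB)'$ reduces to $\mC\,\Id$; I assume throughout that the representation space $\frakH$ is non-zero, and I read primeness of $\ker\pi$ as: for closed two-sided ideals $I_1,I_2$, the inclusion $I_1I_2\subset\ker\pi$ forces $I_1\subset\ker\pi$ or $I_2\subset\ker\pi$.

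First I would attach to every closed two-sided ideal $I$ of $\mcB$ the closed subspace $\overline{\pi(I)\frakH}$ and its orthogonal projection $P_I$, and show that $P_I$ is a central projection of $\pi(\mcB)''$. Indeed, since $I$ is a two-sided ideal, the identity $\pi(b)\pi(a)\xi=\pi(ba)\xi$ with $ba\in I$ shows that $\overline{\pi(I)\frakH}$ is invariant under $\pi(\mcB)$, whence $P_I\in\pi(\mcB)'$; and for $T\in\pi(\mcB)'$ the relation $T\pi(a)\xi=\pi(a)T\xi$ shows the same subspace is invariant under $\pi(\mcB)'$, whence $P_I\in\pi(\mcB)''$. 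Thus $P_I$ lies in the centre, and the factor hypothesis forces $P_I\in\{0,\Id\}$. This gives the crucial dichotomy: for each closed ideal $I$, either $\pi(I)=0$, that is $I\subset\ker\pi$, or else $\overline{\pi(I)\frakH}=\frakH$, that is $P_I=\Id$.

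Next I would use the hypothesis $I_1I_2\subset\ker\pi$ and argue by contradiction, assuming that neither $I_1$ nor $I_2$ is contained in $\ker\pi$; by the dichotomy both $P_{I_1}$ and $P_{I_2}$ equal $\Id$. On the other hand, for $a\in I_1$ and $b\in I_2$ we have $\pi(a)\pi(b)=\pi(ab)=0$, so $\pi(a)$ annihilates the range $P_{I_2}\frakH=\overline{\pi(I_2)\frakH}$, i.e.\ $\pi(a)P_{I_2}=0$. Passing to adjoints and using that closed two-sided ideals of a $\Cst$-algebra are self-adjoint, I obtain $P_{I_2}\pi(a')=0$ for all $a'\in I_1$, so $P_{I_2}$ annihilates $\overline{\pi(I_1)\frakH}$ and $P_{I_2}P_{I_1}=0$. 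But $P_{I_1}=P_{I_2}=\Id$ gives $P_{I_2}P_{I_1}=\Id\neq 0$, contradicting $\frakH\neq 0$. Hence $I_1\subset\ker\pi$ or $I_2\subset\ker\pi$, which is exactly primeness of $\ker\pi$.

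The routine ingredients are the standard facts that $\overline{\pi(I)\frakH}$ is a reducing subspace and that closed two-sided ideals are $\ast$-closed. The real content, and where I expect the argument to hinge, is the identification of $P_I$ with a central projection together with the ensuing factor dichotomy; once that is in place, the implication $I_1I_2\subset\ker\pi\Rightarrow P_{I_2}P_{I_1}=0$ is the decisive and essentially formal computation.
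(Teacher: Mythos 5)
Your proof is correct and takes essentially the same approach as the paper: both hinge on the observation that the orthogonal projection onto $\overline{\pi(I)\frakH}$ lies in $\pi(\mcB)'\cap\pi(\mcB)''=\mC\mdot\Id$, so the factor hypothesis forces it to be $0$ or $\Id$, after which $\pi(I_1)\pi(I_2)=\pi(I_1I_2)=0$ finishes the argument. The only cosmetic difference is that you run the dichotomy symmetrically for both $P_{I_1}$ and $P_{I_2}$ and pass to adjoints to get $P_{I_2}P_{I_1}=0$, whereas the paper uses the single projection $P_{I_2}=\Id$ to conclude $\pi(I_1)=0$ directly.
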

\begin{proof}
Let $I_1$ and $I_2$ be ideals of $\mcB$ such that
$I_1I_2\subset\ker\pi$ and $I_2\not\subset\ker\pi$. Then
$\pi(I_2)\frakH$ is a non-zero closed subspace of the
representation space $\frakH$ of $\pi$ which is $\pi(\mcB)\,$-
and $\pi(\mcB)\,'\,$-invariant. If $P$ denotes the orthogonal
projection onto the subspace $\pi(I_2)\frakH$, then $P$ is
non-zero and in $\pi(\mcB)'\cap\pi(\mcB)''=\mC\mdot\Id$
which is trivial because $\pi$ is a factor representation.
Thus $P=\Id$ which means $\pi(I_2)\frakH=\frakH$. Since
$\pi(I_1)\pi(I_2)=\pi(I_1I_2)=0$,
it follows $I_1\subset\ker\pi$. The proof is complete.
\end{proof}

\noindent A $G$-invariant ideal $J$ of $\mcA$ is called
$G$-prime if $J_1J_2\subset J$ for $G$-invariant ideals $J_1$ and
$J_2$ of $\mcA$ implies $J_1\subset J$ or $J_2\subset J$. Obviously
the hull $h(J)$ of a $G$-invariant ideal $J$ of $\mcA$ is a closed, 
$G$-invariant subset of $\Prim\mcA$, and the kernel $k(A)$ of a 
$G$-invariant subset $A$ of $\Prim\mcA$ is a $G$-invariant ideal 
of $\mcA$.\\\\
In order to prepare the proof of 
Theorem~\ref{Glimm_thm:restrictions_of_ideals_are_orbits} we note

\begin{prop}\label{Glimm_prop:G_prime_ideals}
Let $(G,\mcA)$ be a covariance system.
\begin{enumerate}
\item If $P$ is a prime ideal of $\Cst(G,\mcA)$,
then $J=\res(P)$ is a $G$-prime ideal of $\mcA$.
\item If $J$ is a $G$-prime ideal of $\mcA$, then $h(J)$ is
a $G$-irreducible subset of $\Prim\mcA$.
\item Suppose that $G\setminus\Prim\mcA$ is almost Hausdorff. Then
for every prime ideal~$P$ of~$\Cst(G,\mcA)$ there is a unique
orbit $G\mdot Q$ of $\Prim\mcA$ such that $\res(P)=k(G\mdot Q)$.
\end{enumerate}
\end{prop}
\begin{proof}\hfill
\begin{enumerate}\renewcommand{\labelenumi}{\textit{\arabic{enumi}}.}
\item Let $J_1,J_2$ be $G$-invariant ideals of $\mcA$ such that
$J_1J_2\subset J$. Now Lemma~\ref{Glimm_lem:induced_ideal} implies
$\ext(J_1)\ext(J_2)=\ext(J_1J_2)\subset\ext(J)\subset P$.
Since $P$ is prime, we conclude that $\ext(J_1)\subset P$ or
$\ext(J_2)\subset P$. Consequently $J_1\subset\res(\ext(J_1))\subset J$
or $J_2\subset\res(\ext(J_2))\subset J$. This proves $J$ to be $G$-prime.
\item Let $A_1,A_2$ be closed, $G$-invariant subsets of $h(J)$ such
that $A_1\cup A_2=h(J)$. Clearly $J_1=k(A_1)$ and $J_2=k(A_2)$ are
$G$-invariant ideals. Further $J_1J_2\subset J_1\cap J_2=J$. Since
$J$ is $G$-prime, we conclude $J_1=J$ or $J_2=J$, and hence
$A_1=h(J)$ or $A_2=h(J)$. Thus $h(J)$ is $G$-irreducible.
\item Let $P$ be a prime ideal of~$\Cst(G,\mcA)$. By part~\textit{1.}\ 
and \textit{2.}\ we know that $J:=\res(P)$ is a $G$-prime ideal 
of $\mcA$ and that $h(J)$ is a $G$-irreducible subset of $\Prim\mcA$. 
Since $G\setminus\Prim\mcA$ is almost Hausdorff, it follows from 
Proposition~\ref{Glimm_prop:almost_Hausdorff_implies_quasi_regular}
that $h(J)$ admits a unique generic orbit $G\mdot Q$ with $Q\in\Prim\mcA$.
This means $h(J)=\overline{G\mdot Q}$. Hence we obtain 
$J=k(h(J))=k(G\mdot Q)$. 
\end{enumerate}
\end{proof}

\noindent We note that the proof of part~\textit{3.}\ of the preceding proposition 
does not rely on a solution of Dixmier's problem whether every prime ideal of a 
$\Cst$-algebra is primitive.\\\\
The local closedness of all orbits is a feasible criterion for the 
almost Hausdorffness of the orbit space.

\begin{thm}\label{Glimm_thm:restrictions_of_ideals_are_orbits}
Let $(G,\mcA)$ be  a covariance system where $G$ is a hereditary Lindel\"of, 
locally compact group and $\mcA$ a separable $\Cst$-algebra such that
$\Prim\mcA$ is almost Hausdorff. Assume that all orbits in $\Prim\mcA$ 
are locally closed. Then $\Prim\mcA$ is a regular $G$-space and for every 
prime ideal $P$ of $\Cst(G,\mcA)$ there exists a unique orbit $G\mdot Q$ 
of~$\Prim\mcA$ such that $\res(P)=k(G\mdot Q)$.
\end{thm}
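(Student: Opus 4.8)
The plan is to assemble the theorem from the three propositions proved above, the only new work being to verify the hypotheses under which they apply. Write $X=\Prim\mcA$. Since $\mcA$ is separable and postliminal, $X$ is second countable, locally quasi-compact, and almost Hausdorff, as recorded earlier in the text. Thus $X$ is exactly the kind of $G$-space to which Glimm's theorem and Proposition~\ref{Glimm_prop:characterization_of_smooth_orbits} apply, given that $G$ is $\sigma$-compact and locally compact.

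First I would pass from the hypothesis on orbits to the conclusion that the quotient is almost Hausdorff. By assumption every orbit $G\mdot x$ in $X$ is locally closed; by the equivalence $\textit{1.}\Leftrightarrow\textit{4.}$ of Proposition~\ref{Glimm_prop:characterization_of_smooth_orbits} this means that every orbit is a homogeneous space. Now the implication $\textit{3.}\Rightarrow\textit{1.}$ of Glimm's Theorem~\ref{Glimm_thm:characterization_of_nice_actions} yields that $X/G=\Prim\mcA\,/\,G$ is almost Hausdorff.

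With this in hand the theorem follows from Proposition~\ref{Glimm_prop:G_prime_ideals}. Given a prime ideal $P$ of $\Cst(G,\mcA)$, part~\textit{1.}\ of that proposition shows that $J=\res(P)$ is a $G$-prime ideal of $\mcA$. Since $\Prim\mcA\,/\,G$ is almost Hausdorff, part~\textit{3.}\ then provides a unique orbit $G\mdot Q$ of $\Prim\mcA$ with $J=k(G\mdot Q)$, that is, $\res(P)=k(G\mdot Q)$, as desired.

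I do not expect a genuine obstacle here: the substance of the argument has already been carried out in the preceding propositions, and what remains is the bookkeeping of hypotheses. The one point deserving care is the standard but nontrivial input that a separable postliminal $\Cst$-algebra has second countable, almost Hausdorff, locally quasi-compact primitive ideal space---this is where the $\Cst$-algebraic theory enters and is the load-bearing external fact. Everything else is a direct citation of the chain Proposition~\ref{Glimm_prop:characterization_of_smooth_orbits} $\to$ Theorem~\ref{Glimm_thm:characterization_of_nice_actions} $\to$ Proposition~\ref{Glimm_prop:G_prime_ideals}.
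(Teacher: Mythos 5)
Your proposal is correct and follows exactly the paper's own argument: the same chain Proposition~\ref{Glimm_prop:characterization_of_smooth_orbits} (locally closed $\Rightarrow$ homogeneous) $\to$ Theorem~\ref{Glimm_thm:characterization_of_nice_actions}, implication \textit{3.}$\Rightarrow$\textit{1.} ($\Prim\mcA\,/\,G$ almost Hausdorff) $\to$ Proposition~\ref{Glimm_prop:G_prime_ideals}, parts \textit{1.}\ and \textit{3.}, with the same external input that separability and postliminality make $\Prim\mcA$ second countable, locally quasi-compact, and almost Hausdorff. No gaps; your version is in fact slightly more explicit than the paper's about which parts of each cited result are used.
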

\begin{proof}
Since $\mcA$ is separable, it follows that $\Prim\mcA$ is a second 
countable, locally quasi-compact, almost Hausdorff $G$-space. By assumption
all orbits of $\Prim\mcA$ are locally closed. Since $G$ is hereditary 
Lindel\"of and $\Prim\mcA$ is almost Hausdorff, Proposition~
\ref{Glimm_prop:characterization_of_smooth_orbits} yields that all orbits
are homogeneous spaces so that the assumptions of Glimm's Theorem are 
satisfied. Thus it follows that $G\setminus\Prim\mcA$ is almost Hausdorff.
In particular, $\Prim\mcA$ is a regular $G$-space by 
Proposition~\ref{Glimm_prop:almost_Hausdorff_implies_regular}. Finally 
part~\textit{3.}\ of Propostion~\ref{Glimm_prop:G_prime_ideals} implies 
that there exists a unique $G$-orbit such that $\res(P)=k(G\mdot Q)$.
\end{proof}

\noindent The next result is a consequence of 
Theorem~\ref{Glimm_thm:compact_groups_act_almost_properly} and 
Proposition~\ref{Glimm_prop:G_prime_ideals}.

\begin{prop}
Let $(G,\mcA)$ be a covariance system where $G$ is a compact group 
and $\mcA$ is a $\Cst$-algebra such that $\Prim\mcA$ is almost 
Hausdorff. Then $\Prim\mcA$ is a regular $G$-space and for every 
prime ideal $P$ of $\Cst(G,\mcA)$ there exists a unique orbit
$G\mdot Q$ of $\Prim\mcA$ such that $\res(P)=k(G\mdot Q)$.
\end{prop}
\begin{proof}
Since $G$ is compact and $\Prim\mcA$ is almost Hausdorff, 
Theorem ~\ref{Glimm_thm:compact_groups_act_almost_properly} implies
that $\Prim\mcA$ is a regular $G$-space and that $G\setminus\Prim\mcA$
is almost Hausdorff. Now part~\textit{3.}\ of 
Proposition~\ref{Glimm_prop:G_prime_ideals} yields the desired result.
\end{proof}

\noindent Let $N$ be a closed normal subgroup of a locally compact group $G$. 
Note that $G$ acts on~$N$ by conjugation $n^z:=z^{-1}nz$. The Haar measure 
of~$N$ satisfies $dn^z=\delta(z)dn$ with a continuous homomorphism 
$\delta:N\to\mR_{+}$. Now we see that $a^z(n):=\delta(z^{-1})\,a(n^{z^{-1}})$
defines a continuous action of $G$ on $L^1(N)$ and $\Cst(N)$. This means
that $(G,\Cst(N))$ is a covariance system. We observe that the formulas
\[(a\ast f)(x):=\int_N a(n)f(n^{-1}x)\,dn\quad\text{and}\quad
 (f\ast a)(x):=\int_N f(xn)a(n^{-1})\,dn\]
turn $\Cst(G)$ into a $\Cst(N)$-bimodule. In particular, it holds 
$(ab)\ast f=a\ast(b\ast f)$, $a\ast(f\ast g)=(a\ast f)\ast g$, 
$|a\ast f|_\ast\le |a|_\ast |f|_\ast$, $(a\ast f)^\ast=f^\ast\ast a^\ast$, 
$(f\ast a)^\ast=a^\ast\ast f^\ast$, $(a\ast f)\ast b=a\ast(f\ast b)$ and 
$(f\ast a)\ast g=f\ast(a\ast g)$ for $f,g\in\Cst(G)$ and 
$a,b\in\Cst(N)$. Furthermore
\[f^z(x):=\Delta_G(z^{-1})f(zxz^{-1})\]
defines a continuous action of $G$ on $\Cst(G)$. We have
$(f\ast g)^z=f^z\ast g^z$, $(f^\ast)^z=(f^z)^\ast$ and 
$(a\ast f)^z=a^z\ast f^z$ for $f,g\in\Cst(G)$, $z\in G$ and 
$a\in\Cst(N)$.\\\\
Finally we define
\[\res(P)=\{a\in\Cst(N):a\ast\Cst(G)\subset P\}\]
for ideals $P$ of $\Cst(G)$ and
\[\ext(J)=\langle\,\Cst(G)\ast J\ast\Cst(G)\rangle\,\clos\]
for ideals $J$ of $\Cst(N)$.
\begin{lem}\label{Glimm_lem:induced_ideals_of_CstG}
 Let $J$ be a $G$-invariant ideal of $\Cst(N)$. Then $J\ast\Cst(G)$ is 
 contained in the closure of $\Cst(G)\ast J$, and $\Cst(G)\ast J$ in the 
 closure of $J\ast\Cst(G)$.
\end{lem}
\begin{proof}
 To begin with, we note that there exists a unique surjective homomorphism
 $\Phi:\Cst(G,\Cst(N))\to\Cst(G)$ satisfying 
 $\Phi(f)(x)=\int_N f(xn,n^{-1})\,dn$ for $f\in\mcC_0(G\ltimes N)$. It
 holds $\Phi(f^z)=\Phi(f)^z$ and $\Phi(a\ast f)=a\ast\Phi(f)$.\\
 Let $a\in J$ and $\bar{f}\in\Cst(G)$ be arbitrary. Since $\Phi$ is surjective,
 there is $f\in\Cst(G,\Cst(N))$ such that $\Phi(f)=\bar{f}$. By
 Lemma~\ref{Glimm_lem:induced_ideal} there exist $a_n\in J$ and 
 $f_n\in\Cst(G,\Cst(N))$ such that $\lim f_n\ast a_n=a\ast f$. Applying $\Phi$ 
 we get $\lim\Phi(f_n)\ast a_n=\lim\Phi(f_n\ast a_n)=\Phi(a\ast f)=a\ast\bar{f}$.
 This proves the first inclusion. The second one can be proved similarly.
\end{proof}

\begin{thm}
Let $N$ be a second countable closed normal subgroup of a hereditary 
Lindel\"of locally compact group $G$.  Suppose that $N$ is of type I and 
that all orbits of $\widehat{N}\cong\Prim\Cst(N)$ are locally closed. Then 
for every factor representation $\pi$ of $G$ there exists a unique orbit 
$G\mdot\sigma$ of $\widehat{N}$ such that $\pi\,|\,N$ is weakly equivalent
to $G\mdot\sigma$, i.e.,
\[\ker_{\Cst(N)}\pi=\bigcap_{g\in G}\ker_{\Cst(N)}g\mdot\sigma\;.\]
\end{thm}
\begin{proof}
By Th\'eor\`eme~9.1 of~\cite{Dix3} it follows that $\Cst(N)$ is postliminal. 
By Th\'eor\`eme~4.3.7 and 4.4.5 of~\cite{Dix3}, $\widehat{N}\cong\Prim\Cst(N)$ 
is almost Hausdorff. Furthermore $\Prim\Cst(N)$ is second countable because $N$ 
is second countable. Recall that $G$ acts on $\Cst(N)$ so that $\widehat{N}$ 
becomes a $G$-space in a natural way. All $G$-orbits are assumed to be locally 
closed. By Proposition~\ref{Glimm_prop:characterization_of_smooth_orbits}
all orbits are homogeneous spaces. Now
Theorem~\ref{Glimm_thm:characterization_of_nice_actions} implies that
$G\setminus\widehat{N}$ is almost Hausdorff. Moreover $\widehat{N}$ is a regular 
$G$-space by Proposition~\ref{Glimm_prop:almost_Hausdorff_implies_regular}.\\
Let $\pi$ be a factor representation of $G$. By 
Lemma~\ref{Glimm_lem:factor_representations}, $P:=\ker_{\Cst(G)}\pi$
is prime. The proof of Proposition~\ref{Glimm_prop:G_prime_ideals}, together
with Lemma~\ref{Glimm_lem:induced_ideals_of_CstG}, implies that 
$\res(P)=\ker_{\Cst(N)}\pi$ is $G$-prime and that there exists a unique
orbit $G\mdot\sigma$ of $\widehat{N}$ such that $\res(P)=k(G\mdot\sigma)$.
This proves the claim. 
\end{proof}


\begin{thebibliography}{MMMM}
 \bibitem{Blatt1} R.\ J.\ Blattner, {\sl Group extension representations
 and the structure space.} Pac. J. Math. 15 (1965), pp.\ 1101-1113.
  \bibitem{Dix3} J.\ Dixmier, {\sl Les {$C\sp{\ast} $}-alg\`ebres et leurs
 repr\'esentations.} Deuxi\`eme \'edition. Cahiers Scientifiques, Fasc. XXIX,
 Gauthier-Villars \'Editeur, Paris, 1969.
 \bibitem{Foll} G.\ B.\ Folland, {\sl A course in abstract harmonic analysis.}
 Studies in Advanced Mathematics, CRC Press, Boca Raton, Florida, 1995.
 \bibitem{Glimm1} J.\ Glimm, {\sl Locally compact transformation groups.}
 Trans. Am. Math. Soc. 101 (1961), pp.\ 124-138.
 \bibitem{Green1} P.\ Green, {\sl The local structure of twisted covariance algebras.}
 Acta Math. 140 (1978), no.\ 3-4, pp.\ 191--250.
 \bibitem{Rief2} M.\ A.\ Rieffel, {\sl Unitary representations of group extensions;
 an algebraic approach to the theory of {M}ackey and {B}lattner.}
 Studies in analysis, Adv. in Math. Supl. Stud. 4, pp.\ 43--82,
 Academic Press, New York, 1979.
 \bibitem{Will} Dana P.\ Williams, {\sl Crossed Products of C*-algebras.}
 Mathematical surveys and monographs, American Mathematical Society, Vol.\  134, 2007.
 \end{thebibliography}
\end{document}